\newtheorem{theorem}{Theorem}[section]
\newtheorem{lemma}[theorem]{Lemma}
\newtheorem{proposition}[theorem]{Proposition}
\theoremstyle{definition}
\newtheorem{definition}[theorem]{Definition} 
\theoremstyle{plain}
\newtheorem{question}[theorem]{Question}
\newtheorem*{claim}{Claim}
\title{Iterated jump noncomputability and compactness}
\author{Gavin Dooley}
\thanks{The author thanks his advisor, Peter Cholak, for the instruction, support, wisdom, and positive attitude that he supplies. He also thanks the Erwin Schrödinger International Institute for Mathematics and Physics at the University of Vienna for hosting the summer school and workshop titled ``Reverse Mathematics: New Paradigms,'' at which he learned many things that helped him improve many aspects of this paper.}
\address{Department of Mathematics, University of Notre Dame, USA}
\email{gdooley2@nd.edu}
\begin{document}

\maketitle$ $

\begin{abstract}
    We use reverse mathematics to analyze ``iterated jump'' versions of the following four principles: the atomic model theorem with subenumerable types ($\mathsf{AST}$), the diagonally noncomputable principle ($\mathsf{DNR}$), weak weak Kőnig's lemma ($\mathsf{WWKL}$), and weak Kőnig's lemma ($\mathsf{WKL}$). The logical relationships between these principles are summarized in Figure 1 and include, among other things, an infinite chain and an infinite antichain, the latter of which represents a strong form of non-linearity in terms of provability strength among ``natural'' combinatorial principles.
\end{abstract}

\section{Introduction}

Reverse mathematics and computable mathematics are two closely related ways of measuring the complexity of mathematical ideas. The former consists in formalizing theorems in the language of second-order arithmetic and comparing their logical strengths over a weak theory called $\mathsf{RCA}_0$. The latter identifies the extent to which mathematical constructions can be carried out algorithmically. (Specifically, it identifies which black boxes, or ``oracles,'' are necessary to carry them out.) Research has consistently uncovered connections between these two approaches, with many mathematical principles exhibiting equivalences with computability-theoretic properties. For a thorough treatment of contemporary reverse and computable mathematics, see \cite{DzhafarovMummert22}.

In this paper, we situate some well-known principles into a broader context by analyzing ``iterated jump'' versions of them. The logical relationships between these principles are summarized in Figure \ref{fig:iterjumps} and include, among other things, an infinite chain and an infinite antichain, the latter of which represents a strong form of non-linearity in terms of provability strength among ``natural'' combinatorial principles.

Section \ref{comptheochar} of this paper lists mathematical principles with known computability-theoretic characterizations, and Section \ref{logrel} discusses the logical relationships between these principles. Section \ref{iterjumpssection} introduces ``iterated jump'' versions of four of these principles and states the main results of this paper, which fully describe the logical relationships between them. Section \ref{proofssection} provides the proofs.

\begin{figure}
    \centering
    \begin{tikzpicture}
        \node[align = center] at (2, 8) (ACA) {$\mathsf{ACA}$};

        \node at (-0.5, 8.4) (dotsplaceholder0) {$ $};
        \node at (1.8, 7.2) (dotsplaceholder1) {$ $};

        \node[align = center] at (0.6, 7) (WKLplaceholder) {$ $};
        \node[align = center] at (-1.9, 6.2) (WWKLplaceholder) {$ $};
        \node[align = center] at (-4.4, 5.4) (RWWKLplaceholder) {$ $};
        \node[align = center] at (-6.9, 4.6) (ASTplaceholder) {$ $};

        \node[align = center] at (2.9, 5.8) (WKL2) {$\mathsf{WKL}^{(2)}$};
        \node[align = center] at (0.4, 5) (WWKL2) {$\mathsf{WWKL}^{(2)}$};
        \node[align = center] at (-2.1, 4.2) (RWWKL2) {$\mathsf{DNR}^{(2)}$};
        \node[align = center] at (-4.6, 3.4) (AST2) {$\mathsf{AST}^{(2)}$};
        
        \node[align = center] at (5.2, 4.6) (WKL1) {$\mathsf{WKL}^{(1)}$};
        \node[align = center] at (2.7, 3.8) (WWKL1) {$\mathsf{WWKL}^{(1)}$};
        \node[align = center] at (0.2, 3) (RWWKL1) {$\mathsf{DNR}^{(1)}$};
        \node[align = center] at (-2.3, 2.2) (AST1) {$\mathsf{AST}^{(1)}$};
       
        \node[align = center] at (7.5, 3.4) (WKL0) {$\mathsf{WKL}^{(0)}$};
        \node[align = center] at (5, 2.6) (WWKL0) {$\mathsf{WWKL}^{(0)}$};
        \node[align = center] at (2.5, 1.8) (RWWKL0) {$\mathsf{DNR}^{(0)}$};
        \node[align = center] at (0, 1) (AST0) {$\mathsf{AST}^{(0)}$};
        
        \node[align = center] at (0, -0.25) (RCA0) {$\mathsf{RCA}_0$};

        \draw[->, >=stealth, double, double distance = 1.5] (AST0) -- (RCA0);
        \draw[->, >=stealth, double, double distance = 1.5] (AST1) -- (AST0);
        \draw[->, >=stealth, double, double distance = 1.5] (AST2) -- (AST1);
        
        \draw[->, >=stealth, double, double distance = 1.5] (RWWKL0) -- (AST0);
        \draw[->, >=stealth, double, double distance = 1.5] (WWKL0) -- (RWWKL0);
        \draw[->, >=stealth, double, double distance = 1.5] (WKL0) -- (WWKL0);

        \draw[->, >=stealth, double, double distance = 1.5] (RWWKL1) -- (AST1);
        \draw[->, >=stealth, double, double distance = 1.5] (WWKL1) -- (RWWKL1);
        \draw[->, >=stealth, double, double distance = 1.5] (WKL1) -- (WWKL1);

        \draw[->, >=stealth, double, double distance = 1.5] (RWWKL2) -- (AST2);
        \draw[->, >=stealth, double, double distance = 1.5] (WWKL2) -- (RWWKL2);
        \draw[->, >=stealth, double, double distance = 1.5] (WKL2) -- (WWKL2);

        \draw[->, >=stealth, double, double distance = 1.5] (ACA) -- (WKL0);
        \draw[->, >=stealth, double, double distance = 1.5] (ACA) -- (WKL1);
        \draw[->, >=stealth, double, double distance = 1.5] (ACA) -- (WKL2);

        \draw[loosely dotted, very thick] (ASTplaceholder) -- (AST2);
        \draw[loosely dotted, very thick] (RWWKLplaceholder) -- (RWWKL2);
        \draw[loosely dotted, very thick] (WWKLplaceholder) -- (WWKL2);
        \draw[loosely dotted, very thick] (WKLplaceholder) -- (WKL2);
        
    \end{tikzpicture}
    \caption{Logical relationships between iterated jumps of principles. Double arrows indicate strict implications, and no additional arrows can be added. This diagram shows the logical relationships between these principles both over $\mathsf{RCA}_0$ and over $\omega$-models.}
    \label{fig:iterjumps}
\end{figure}

\section{Mathematical principles with computability-theoretic characterizations}
\label{comptheochar}

In this section, which may be superficially skimmed, we provide the statements of mathematical principles with known computability-theoretic equivalences and include references for proofs these equivalences.

When Friedman first introduced reverse mathematics, he immediately observed a correspondence between certain important mathematical principles and certain fundamental computability-theoretic notions. Define an \emph{$\omega$-model} to be a model of $\mathsf{RCA}_0$ whose first-order part is $\omega$ (i.e., whose first-order part is standard).

\begin{definition}$ $
    \begin{enumerate}
        \item \emph{Weak Kőnig's lemma} ($\mathsf{WKL}$) is the following statement: for every infinite tree $T \subseteq 2^{<\mathbb{N}}$, there exists a path $f \in 2^\mathbb{N}$ through $T$. Let $\mathsf{WKL}_0$ denote $\mathsf{RCA}_0 + \mathsf{WKL}$.
        \item The \emph{arithmetical comprehension axiom} ($\mathsf{ACA}$) is the following scheme: for every arithmetical formula $\varphi(n)$, there exists an $X$ such that $\forall n [n \in X \leftrightarrow \varphi(n)]$. Let $\mathsf{ACA}_0$ denote $\mathsf{RCA}_0 + \mathsf{ACA}$.
    \end{enumerate}
    
\end{definition}

\begin{theorem}[\cite{Friedman75} (pp. 237-238). See \cite{DzhafarovMummert22} (p. 96 and Thm. 5.5.3)]$ $
    \begin{enumerate}
        \item The second-order parts of $\omega$-models (of $\mathsf{RCA}_0$) are precisely the Turing ideals (i.e., the collections of sets downward closed under Turing reduction and closed under effective join).
        \item The second order parts of $\omega$-models of $\mathsf{WKL}_0$ are precisely the Scott ideals (i.e., the Turing ideals closed under the existence of PA degrees (i.e., the Turing ideals such that, for all $X$ in the ideal, there exists a $Y$ in the ideal that is of PA degree over $X$)).
        \item The second-order parts of $\omega$-models of $\mathsf{ACA}_0$ are precisely the jump ideals (i.e., the Turing ideals closed under the Turing jump).
    \end{enumerate}
\end{theorem}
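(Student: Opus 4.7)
The plan is to prove each of the three correspondences by a forward–backward argument: show that the second-order part $\mathcal{S}$ of any $\omega$-model has the stated closure properties, and conversely that any collection $\mathcal{S}\subseteq 2^\omega$ with those closure properties gives rise to an $\omega$-model.

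For part (1), I would first argue that if $(\omega,\mathcal{S})\models\mathsf{RCA}_0$, then $\mathcal{S}$ is closed under Turing reducibility and effective join. Closure under join is immediate since $X\oplus Y$ is $\Delta^0_1$ in $\{X,Y\}$, and $\mathsf{RCA}_0$ contains $\Delta^0_1$-comprehension. Closure under Turing reducibility follows because if $Y\leq_T X\in\mathcal{S}$, then $Y$ is $\Delta^0_1$ in $X$, so $\Delta^0_1$-comprehension forces $Y\in\mathcal{S}$. For the converse, given a Turing ideal $\mathcal{S}$, I would verify each axiom of $\mathsf{RCA}_0$: the first-order axioms hold in $\omega$, $\Sigma^0_1$-induction holds for arithmetic formulas over parameters from $\mathcal{S}$ because $\omega$ is the standard model, and $\Delta^0_1$-comprehension produces only sets Turing reducible to existing parameters, which lie in $\mathcal{S}$ by ideal closure.

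For part (2), I would use part (1) as a base: the $\omega$-models of $\mathsf{WKL}_0$ are exactly those $\omega$-models of $\mathsf{RCA}_0$ in which every infinite $X$-computable subtree of $2^{<\omega}$ has a path in $\mathcal{S}$. The key technical fact to invoke is the classical characterization: for any $X$, a set $Y$ is of PA degree over $X$ if and only if $Y$ computes a path through every infinite $X$-computable binary tree. Given this, closure under PA degrees in the Scott sense is equivalent to the property just described. The forward direction applies the tree of $\{0,1\}$-valued $X$-DNC functions; the backward direction notes that any infinite $X$-computable tree admits an $X$-PA path.

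For part (3), the forward direction uses the fact that for any $X\in\mathcal{S}$, the jump $X'=\{e:\Phi_e^X(e)\!\!\downarrow\}$ is defined by a $\Sigma^0_1(X)$ formula, hence exists in $\mathcal{S}$ by arithmetical comprehension. For the backward direction, given a jump ideal $\mathcal{S}$ and an arithmetic formula $\varphi(n)$ with parameters $X_1,\dots,X_k\in\mathcal{S}$, let $X=X_1\oplus\cdots\oplus X_k$, and observe that $\{n:\varphi(n)\}$ is computable from some finite iterate $X^{(m)}$, which lies in $\mathcal{S}$ by successive applications of jump closure. Standard induction arguments (which go through in $\omega$-models) complete the verification of the $\mathsf{ACA}$ schema.

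The main obstacle is the backward direction of part (2): one must cite (or reprove) the equivalence between $Y$ being of PA degree over $X$ and $Y$ computing a path through every infinite $X$-computable binary tree. Everything else amounts to unpacking the definitions of the relevant comprehension and closure properties and matching them syntactically against membership in Turing, Scott, and jump ideals.
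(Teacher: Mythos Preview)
The paper does not supply its own proof of this theorem; it is stated as a cited background result with references to \cite{Friedman75} and \cite{DzhafarovMummert22}. Your proposal is correct and follows the standard textbook argument one finds in those sources. One remark: the ``main obstacle'' you flag in part~(2) is not an obstacle in this paper, since immediately after the theorem the author explicitly \emph{defines} ``$Y$ is of PA degree over $X$'' to mean that $Y$ computes a path through every infinite $X$-computable binary tree. With that convention, the backward direction of (2) is immediate, and for the forward direction your idea of applying $\mathsf{WKL}$ to the $X$-computable tree of $\{0,1\}$-valued $X$-DNC functions (any path through which computes a path through every infinite $X$-computable binary tree) is exactly the standard move.
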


Being of PA degree has many equivalent characterizations. For our purposes, it is convenient to take ``$Y$ is of PA degree over $X$'' (denoted ``$Y \gg X$'') to simply mean that $Y$ computes a path through any $X$-computable infinite binary tree, since this definition makes the equivalence of $\mathsf{WKL}$ with ``$\forall X \ \exists Y \ Y \gg X$'' most immediate.

Many other mathematical principles exhibit equivalences with computability-theoretic properties. Figure \ref{fig:compthychar} summarizes the relationships between these principles, which are subsequently detailed.

\begin{figure}
    \centering
    \begin{tikzpicture}
        \node[align = center] at (-2,11.5) (ACA) {$\mathsf{ACA}$ \\ $\forall X \exists Y \ Y \geq_\text{T} X'$};
        \node[align = center] at (-7.5,8.45) (WKL) {$\mathsf{WKL}$ \\ $\forall X \exists Y \ Y\gg X$};
        \node[align = center] at (-8.5, 6.5) (WWKL) {$\mathsf{WWKL}$ \\ $\forall X \exists Y \ Y\text{ $1$-random over } X$};
        \node[align = center] at (-7.5, 4.55) (RWWKL) {$\mathsf{RWWKL}$ \\ $\forall X \exists Y \ Y \text{ DNC over } X$};
        \node[align = center] at (-4, 9.3) (RRT22) {$\mathsf{RRT}^2_2$ \\ $\forall X \exists Y \ Y \text{ DNC over } X'$};
        \node[align = center] at (0, 9.3) (COH) {$\mathsf{COH}$ \\ $\forall X \exists Y \ Y' \gg X'$};
        \node[align = center] at (-2, 5.5) (Seq*TS1) {$\mathsf{Seq}^*(\mathsf{TS}^1)$ \\ $\forall X \exists Y \ Y' \text{ DNC over } X'$};
        \node[align = center] at (3, 8.2) (FIP) {$\mathsf{FIP}$ \\ $\forall X \exists Y \ Y \text{ $1$-generic over } X$};
        \node[align = center] at (-2, 3.1) (OPT) {$\mathsf{OPT}$ \\ $\forall X \exists Y \ Y \text{ hyperimmune over } X$};
        \node[align = center] at (-2, 1.1) (AST) {$\mathsf{AST}$ \\ $\forall X \exists Y \ Y \nleq_\text{T} X$};
        \node[align = center] at (-2, -0.35) (RCA0) {$\mathsf{RCA}_0$};

        \draw[->, >=stealth, double, double distance = 1.5] (AST) -- (RCA0);
        \draw[->, >=stealth, double, double distance = 1.5] (ACA) -- (WKL);
        \draw[->, >=stealth, double, double distance = 1.5] (ACA) -- (RRT22);
        \draw[->, >=stealth, double, double distance = 1.5] (ACA) -- (COH);
        \draw[->, >=stealth, double, double distance = 1.5] (ACA) -- (FIP);
        \draw[->, >=stealth, double, double distance = 1.5] (WKL) -- (WWKL);
        \draw[->, >=stealth, double, double distance = 1.5] (WWKL) -- (RWWKL);
        \draw[->, >=stealth, double, double distance = 1.5] (RWWKL) -- (AST);
        \draw[->, >=stealth, double, double distance = 1.5] (RRT22) -- (RWWKL);
        \draw[->, >=stealth, double, double distance = 1.5] (COH) -- (Seq*TS1);
        \draw[->, >=stealth, double, double distance = 1.5] (FIP) -- (OPT);
        \draw[->, >=stealth, double, double distance = 1.5] (OPT) -- (AST);
        \draw[->, >=stealth, double, double distance = 1.5] (Seq*TS1) -- (OPT);
        \draw[->, >=stealth, double, double distance = 1.5] (RRT22) -- (Seq*TS1);
        \draw[->] (WKL) -- (OPT) node[pos = 0.8, sloped] {$/$};
        \draw[->] (RRT22) -- (WWKL) node[pos = 0.4, sloped] {$/$};
        \draw[->] (COH) -- (RWWKL) node[pos = 0.5, sloped] {$/$};
        \draw[->] (FIP) -- (Seq*TS1) node[pos = 0.5, sloped] {$/$};
        \draw[->] (FIP) -- (RWWKL) node[pos = 0.65, sloped] {$/$};
        \draw[->] (RRT22) -- (COH) node[pos = 0.5, sloped] {$/$};
    \end{tikzpicture}
    \caption{Logical relationships between various principles with computability-theoretic characterizations. In the cases of $\mathsf{COH}$ and $\mathsf{\mathsf{Seq}^*(\mathsf{TS}^1)}$, these characterizations are only known to hold over $\omega$-models. In fact, the characterization for $\mathsf{COH}$ is specifically known not to hold over $\mathsf{RCA}_0$. (See \cite{Belanger2022}.) Double arrows indicate strict implications, and slashed arrows indicate nonimplications. This diagram shows both what is known over $\mathsf{RCA}_0$ and what is known over $\omega$-models.}
    \label{fig:compthychar}
\end{figure}

\begin{definition}
    \emph{Weak weak Kőnig's lemma} ($\mathsf{WWKL}$) is the following statement: for every tree $T \subseteq 2^{<\mathbb{N}}$ such that $\lim_n \frac{|\{ \sigma \in 2^n : \sigma \in T \}|}{2^n} >0$, there exists a path $f \in 2^\mathbb{N}$ through $T$.
\end{definition}

\begin{definition}[$1$-randomness]
    Given a finite set $U \in 2^{< \mathbb{N}}$, define $[[U]]$ to be the set of all $\mathcal{U} \in 2^{\mathbb{N}}$ such that $U \prec \mathcal{U}$. (I.e., define $[[U]]$ to be the basic open set generated by $U$ in Cantor space.) A sequence $\langle \mathcal{U}_n : n \in \omega \rangle$ of subsets of $2^\mathbb{N}$ is \emph{uniformly $\Sigma^0_1(D)$} if there is a uniformly $D$-c.e. sequence $\langle U_n : n \in \omega \rangle$ of subsets of $2^{< \mathbb{N}}$ such that $\mathcal{U}_n = [[U_n]]$ for all $n$. A \emph{Martin-Löf test over $D$} is a uniformly $\Sigma^0_1(D)$ sequence $\langle \mathcal{U}_n : n \in \omega \rangle$ of subsets of $2^\mathbb{N}$ such that $\mu(\mathcal{U}_n) \leq 2^{-n}$ for all $n$. A set $X$ \emph{passes} a Martin-Löf test $\langle \mathcal{U}_n : n \in \omega \rangle$ over $D$ if $X \notin \bigcap_n [[U_n]]$, where $\langle U_n : n \in \omega \rangle$ is the uniformly $D$-c.e. sequence generating $\langle \mathcal{U}_n : n \in \omega \rangle$. A set $X$ is \emph{$1$-random over $D$} if it passes every Martin-Löf test over $D$.
\end{definition}

\begin{theorem}[\cite{Kucera1985} (p. 248). See \cite{DzhafarovMummert22} (Cor. 9.11.3)]
    $\mathsf{RCA}_0 \vdash \mathsf{WWKL} \leftrightarrow \forall X \ \exists Y \ Y$ is $1$-random over $X$.
\end{theorem}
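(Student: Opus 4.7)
The biconditional splits into two directions, both standard Ku\v{c}era-style constructions.

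For the $(\Rightarrow)$ direction, I would fix an oracle $X$ and build the universal Martin-L\"of test $\{U_k\}_k$ relative to $X$: a uniformly $\Sigma^0_1(X)$ sequence of open sets with $\mu(U_k) \le 2^{-k}$ whose intersection is the set of non-$1$-randoms over $X$. Fix $k = 1$, so $\mu(U_1) \le 1/2$. Its complement $2^\mathbb{N} \setminus U_1$ is a $\Pi^0_1(X)$ class of measure at least $1/2$, and hence equals the set of paths through an $X$-computable tree $T$, built by declaring $\tau \in T$ iff $\tau$ does not extend any string enumerated into $U_1$ by stage $|\tau|$. Applying $\mathsf{WWKL}$ to $T$ produces a path $Y$, and $Y \notin U_1 \supseteq \bigcap_k U_k$ makes $Y$ be $1$-random over $X$.

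For the $(\Leftarrow)$ direction, let $T$ be a tree with $\lim_n |T \cap 2^n|/2^n = \epsilon > 0$ and let $Y$ be $1$-random over $T$. I would invoke Ku\v{c}era's tail lemma: there exists $n$ such that the shift $Y(n)Y(n+1)\cdots$ is a path through $T$, providing the path demanded by $\mathsf{WWKL}$. To prove the tail lemma, consider the $\Pi^0_2(T)$ class $N = \{Z : \forall n,\ Z(n)Z(n+1)\cdots \notin [T]\}$; its complement is the shift-invariant event ``some tail of $Z$ lies in $[T]$'', which has measure at least $\epsilon$ (witnessed already at $n = 0$). By a Kolmogorov $0$-$1$ law argument, equivalently by ergodicity of the Bernoulli shift, this complement has full measure, forcing $\mu(N) = 0$. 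Hence $N$ is covered by a Martin-L\"of test over $T$, and $Y \notin N$ yields the desired tail.

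The principal obstacle is formalizing $\mu(N) = 0$ inside $\mathsf{RCA}_0$, since neither the $0$-$1$ law nor Bernoulli ergodicity is available as a black box in this weak base theory. The standard workaround is to construct the Martin-L\"of test covering $N$ explicitly, using approximate independence of well-spaced shift events together with care that all measure computations remain effective; the details, which are routine in the formalism once the universal Martin-L\"of test is set up, are presented in \cite{DzhafarovMummert22}.
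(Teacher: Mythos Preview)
The paper does not supply its own proof of this theorem; it is stated in Section~\ref{comptheochar} purely as a citation to \cite{Kucera1985} and \cite{DzhafarovMummert22}. Your proposal is a correct outline of the standard argument found in those references: the forward direction via the complement of a level of the universal Martin-L\"of test relative to $X$, and the backward direction via Ku\v{c}era's tail lemma applied to a $1$-random over the tree. Your acknowledgment that the $\mu(N)=0$ step resists a black-box appeal to ergodicity in $\mathsf{RCA}_0$, and must instead be handled by an explicit effective cover, is exactly the point the cited sources address. There is nothing further to compare against in the paper itself.
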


\begin{definition}
    Recall that $A \subseteq^*B$ means that $A \setminus B$ is finite. The \emph{cohesive principle} ($\mathsf{COH}$) is the following statement: for every sequence $\langle R_i : i \in \omega \rangle$ of subsets of $\mathbb{N}$, there exists an infinite set $S \subseteq \mathbb{N}$ such that, for each $i$, either $S \subseteq^* R_i$ or $S \subseteq^* \overline{R_i}$.
\end{definition}

\begin{theorem}[\cite{JockuschStephan93} (Thm. 2.1). See \cite{DzhafarovMummert22} (Thm. 8.4.13)] \label{JockuschStephen}
    The second order parts of $\omega$-models of $\mathsf{RCA}_0 + \mathsf{COH}$ are precisely those $\mathcal{I}$ such that, for each $X \in \mathcal{I}$, there exists a $Y \in \mathcal{I}$ such that $Y' \gg X'$.
\end{theorem}

\begin{definition}
    The \emph{omitting partial types principle} ($\mathsf{OPT}$) is the following statement: for every complete theory $T$ and every listing $\langle \Gamma_i : i \in \omega \rangle$ of partial types of $T$, there exists a model $\mathcal{B}$ of $T$ omitting all nonprincipal $\Gamma_i$.
\end{definition}

\begin{definition}[hyperimmunity]
    The principal function of an infinite set $X = \{ x_0 < x_1 < \dots \}$ is the function $p_X : \mathbb{N} \to X$ such that $p_X(n) = x_n$ for each $n$. Given $f, g : \mathbb{N} \to \mathbb{N}$, we say that $f$ \emph{dominates} $g$ if $f(n) \geq g(n)$ for all sufficiently large $n$. We say that an infinite set $X$ is \emph{hyperimmune over $D$} if no $D$-computable function dominates the principal function of $X$.
\end{definition}

\begin{theorem}[\cite{HirschfeldtShoreSlaman2009} (Thm. 5.7). See \cite{DzhafarovMummert22} (Thm. 9.10.12)]
   $\mathsf{RCA}_0 \vdash \mathsf{OPT} \leftrightarrow \forall X \ \exists Y \ Y$ is hyperimmune over $X$.
\end{theorem}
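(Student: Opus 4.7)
The plan is to prove each implication separately.

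For the backward direction, suppose every $X$ admits some $Y$ hyperimmune over it, and let $T$ be a complete theory with a listing $\langle \Gamma_i : i \in \omega\rangle$ of its partial types, both coded by an oracle $X$. Fix $Y$ hyperimmune over $X$ with some $g \leq_\text{T} Y$ that is not dominated by any $X$-computable function. I would build a model $\mathcal{B}$ of $T$ via a $Y$-computable Henkin construction over constants $\{d_n\}$, maintaining at each stage $s$ a finite consistent extension $T_s \supseteq T$. Completeness and Henkin-witness requirements are met unconditionally; interleaved with these are omitting requirements $R_{i,n}$ asking that $d_n$ not realize $\Gamma_i$. To attack $R_{i,n}$ at a stage $s$, search (in oracle $X$) for at most $g(s)$ steps for a formula $\psi(d_n)$ consistent with $T_s$ and a $\gamma \in \Gamma_i$ such that $T_s \cup \{\psi(d_n)\} \vdash \neg\gamma(d_n)$, committing to $\psi$ if found. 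The key observation is that if $\Gamma_i$ is non-principal then such a diagonalizing $\psi$ exists over \emph{every} consistent extension of $T$; taking a max over the finitely many possible histories of the construction through stage $s$ yields an $X$-computable upper bound $H(s)$ on the required search time, and hyperimmunity of $g$ then forces $g(s) \geq H(s)$ at some stage attending to $R_{i,n}$, meeting the requirement.

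For the forward direction, assume $\mathsf{OPT}$ and let $X$ be given. I would design an $X$-computable complete theory $T$ together with an $X$-computable listing $\langle \Gamma_e : e \in \omega\rangle$ of partial types such that every model of $T$ omitting every non-principal $\Gamma_e$ yields a set $Y$ hyperimmune over $X$. A natural setup uses a language with countably many constants arranged (say, in a ``grid'' structure) so that each element of a model naturally codes an increasing function $\omega \to \omega$; one then takes $\Gamma_e$ to express ``the function coded by $x$ is dominated by $\varphi_e^X$.'' Non-principality of $\Gamma_e$ corresponds precisely to $\varphi_e^X$ failing to be a total dominating function, so applying $\mathsf{OPT}$ to $(T, \langle \Gamma_e\rangle)$ produces a model whose generic element codes a function dominated by no total $X$-computable function, providing the required hyperimmune $Y$.

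I expect the backward direction to carry the main technical weight: carefully scheduling the interleaved requirements and bounding the per-stage search time via an $X$-computable maximum over possible histories requires some care, so that each $R_{i,n}$ is met exactly when $\Gamma_i$ is non-principal, without sabotaging completeness or Henkinness. The forward direction is essentially an exercise in designing a theory whose non-principal types encode failures of domination; the main thing to verify there is that principality in the constructed theory is tight enough to capture total dominating $\varphi_e^X$ and loose enough to admit a model whenever the required ``diagonal'' $Y$ has been produced by $\mathsf{OPT}$.
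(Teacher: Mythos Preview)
The paper does not itself prove this theorem; it is stated with a citation to \cite{HirschfeldtShoreSlaman2009} and \cite{DzhafarovMummert22}, so there is no in-paper argument to compare against. Your plan follows the standard route taken in those references.

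There is, however, a concrete gap in your backward direction. You assert that ``taking a max over the finitely many possible histories of the construction through stage $s$'' gives an $X$-computable bound $H(s)$. But as you have described the construction, at each earlier stage $t<s$ the diagonalizing formula committed to may be \emph{any} formula found within the first $g(t)$ search steps, and $g$ is unbounded; hence the set of possible theories $T_s$ is infinite and no such max exists. The usual repair is to cap the search at stage $s$ to formulas of index below some fixed $X$-computable bound in $s$ (for instance, $s$ itself). Then $T_s$ ranges over only finitely many possibilities, $H(s)$ is well-defined and $X$-computable, and non-domination of $g$ guarantees a stage attending $R_{i,n}$ at which the capped search succeeds whenever $\Gamma_i$ is non-principal. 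Without this cap the argument as written does not go through.

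Your forward-direction sketch has the right shape but leaves the crux unargued: the claim that non-principality of $\Gamma_e$ corresponds exactly to $\varphi_e^X$ failing to be total is precisely what must be engineered, and you have not specified the theory $T$ or verified this correspondence. In the cited proofs this takes some care (one must arrange that every finite fragment of $\Gamma_e$ is realized while no single consistent formula isolates the whole type unless $\varphi_e^X$ is total); as written, your sketch is a statement of intent rather than an argument.
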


\begin{definition}
    For a theory $T$, a \emph{subenumeration} of the types of $T$ is a listing $\langle \Gamma_i : i \in \omega \rangle$ of partial types of $T$ such that, for each complete type $\Gamma$ of $T$, there is an $i$ such that $\Gamma$ and $\Gamma_i$ are equivalent. The \emph{atomic model theorem with subenumerable types} ($\mathsf{AST}$) is the following statement: for every complete theory $T$ and every subenumeration $\langle \Gamma_i : i \in \omega \rangle$ of the types of $T$, there exists an atomic model $\mathcal{B} \models T$.
\end{definition}

\begin{theorem}[\cite{HirschfeldtShoreSlaman2009} (Thm. 6.3). See \cite{DzhafarovMummert22} (Thm. 9.10.15)]
   $\mathsf{RCA}_0 \vdash \mathsf{AST} \leftrightarrow \forall X \ \exists Y \ Y \ngeq_\text{\emph{T}} X$.
\end{theorem}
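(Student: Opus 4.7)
Both directions of the equivalence must be established over $\mathsf{RCA}_0$.

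For the $(\Rightarrow)$ direction, given any set $X$, I would construct an $X$-computable complete theory $T$ together with an $X$-computable subenumeration $\vec{\Gamma}$ of its types, with the property that no atomic model of $T$ is $X$-computable. Applying $\mathsf{AST}$ to this $T$ then produces an atomic model $\mathcal{B}$ whose atomic diagram, suitably coded, gives a set $Y$ with the required non-reducibility to $X$. The theory $T$ itself is built by a stagewise diagonalization in a Henkin-style language with countably many constants $\{c_n\}$ and a distinguished unary predicate $U$: at stage $e$ we diagonalize against the $e$-th $X$-computable candidate $\Phi_e^X$ for the complete diagram by forcing some principal type of $T$ to commit to a $U$-value that disagrees with $\Phi_e^X$. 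Maintaining completeness of $T$, consistency of the axioms, and subenumerability of $\vec{\Gamma}$ while performing these diagonalizations is the main bookkeeping task.

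For the $(\Leftarrow)$ direction, assume the escape property, and let $T$ be complete with subenumeration $\vec{\Gamma}$; set $Z = T \oplus \vec{\Gamma}$. Since $T$ has only countably many types (each equivalent to some $\Gamma_i$), a Baire-category style argument inside $\mathsf{RCA}_0$ shows that the principal types of $T$ are dense in the Stone space of $T$, so $T$ is atomic in the classical sense. The task is thus to realize an atomic model inside the universe. I would apply the escape principle to $Z$ to obtain a set $Y$ with the indicated non-reducibility, and then Henkin-construct an atomic model stage by stage using $Y \oplus Z$ as oracle. At each stage, when extending the partial diagram by a formula about a Henkin constant $c_n$, we locate a complete formula $\psi(x)$ generating a principal type compatible with our current commitments on $c_n$ and add $\psi(c_n)$ to the diagram; the oracle $Y$ guides the search so as to steer away from non-principal branches of the subenumeration.

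The main obstacle is squarely in the $(\Leftarrow)$ direction: the principal-index set $\{i : \Gamma_i \text{ is principal}\}$ is $\Sigma^0_2(Z)$, so the oracle $Y \oplus Z$ does not suffice on its face to locate principal generators. The technical heart of the argument is a priority construction showing that any $Y$ escaping $Z$ is ``generic enough'' to drive the Henkin argument: one produces a candidate generator $\psi$ at each stage using $Y$, and shows that if the candidate turned out to be non-principal then $Y$ would become $Z$-computable, contradicting the escape hypothesis. Carrying out this finite-injury-style argument entirely within $\mathsf{RCA}_0$, without inadvertently using $Z'$ or similar, is the delicate step.
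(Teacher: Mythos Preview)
The paper does not prove this theorem; it is quoted with citations to \cite{HirschfeldtShoreSlaman2009} and \cite{DzhafarovMummert22} and then used as a black box. There is therefore no argument in the paper against which to compare your proposal. (The printed statement also contains an apparent typo --- it should read $Y \nleq_{\mathrm{T}} X$ rather than $Y \ngeq_{\mathrm{T}} X$, as is clear from the later definition of $\mathsf{AST}^{(m)}$ in Section~\ref{iterjumpssection} --- and your reading of it as the ``escape property'' is the intended one.)

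Evaluating your sketch on its own terms: the $(\Rightarrow)$ direction is standard and correctly outlined. For $(\Leftarrow)$ your overall plan --- pick $Y \nleq_{\mathrm{T}} Z$, run a $(Y\oplus Z)$-computable Henkin construction, and argue that if it fails to produce an atomic model then $Y \leq_{\mathrm{T}} Z$ --- is the right shape and is essentially how the cited sources proceed. The description you give of \emph{why} failure would force $Y \leq_{\mathrm{T}} Z$, however, is not convincing as written: labelling it a ``priority'' or ``finite-injury'' construction is a misnomer (there are no injuries), and the assertion that a single non-principal guess makes $Y$ become $Z$-computable needs justification, since one bad stage reveals only finitely many bits of $Y$. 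The real work lies in arranging the construction so that the subenumeration $\vec{\Gamma}$ converts a globally non-atomic outcome into a total $Z$-computation of $Y$; that arrangement is the substance of the Hirschfeldt--Shore--Slaman proof and is not something your sketch supplies.
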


\begin{definition}
    \emph{Two-bounded rainbow Ramsey's theorem for pairs} ($\mathsf{RRT}^2_2$) is the following statement: for every $c : [\mathbb{N}]^2 \to \mathbb{N}$ such that $|c^{-1}(z)| \leq 2$ for each $z \in \mathbb{N}$, there exists an infinite set $R$ such that $c$ is injective on $[R]^2$.
\end{definition}

\begin{definition}[diagonal noncomputability]
    A function $f : \mathbb{N} \to \mathbb{N}$ is \emph{DNC over $D$} if, for every $e \in \mathbb{N}$, if $\Phi^D_e(e) \downarrow$, then $f(e) \neq \Phi^D_e(e)$. A set $X$ is \emph{DNC over $D$} if $X$ computes a function that is DNC over $D$.
\end{definition}

\begin{theorem}[Joseph S. Miller, unpublished. See \cite{DzhafarovMummert22} (Thm. 9.4.13)]
    $\mathsf{RCA}_0 \vdash \mathsf{RRT}^2_2 \leftrightarrow \forall X \ \exists Y \ Y \text{ is DNC over } X'$.
\end{theorem}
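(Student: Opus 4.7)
The plan is to prove both directions of the equivalence separately over $\mathsf{RCA}_0$.

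For the forward direction ($\mathsf{RRT}^2_2 \Rightarrow \forall X \exists Y$ with $Y$ DNC over $X'$), given $X$, I build an $X$-computable 2-bounded coloring $c : [\N]^2 \to \N$ such that every infinite rainbow of $c$ computes a function DNC over $X'$. By the relativized limit lemma, fix an $X$-computable $g : \N \times \N \to \N$ with $\lim_s g(e, s) = \Phi^{X'}_e(e)$ whenever $\Phi^{X'}_e(e)$ converges. I design $c$ to encode the sequence of guesses $\langle g(e, s) \rangle_s$ for each $e$: fresh colors are introduced whenever the guess changes, and the color indexed by the eventual stable value of $g(e, \cdot)$ is used on at most two pairs, so the 2-bound is preserved. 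From any infinite rainbow $R$ of $c$, one extracts, for each $e$ with $\Phi^{X'}_e(e) \downarrow$, a value that must differ from $\Phi^{X'}_e(e)$, and this extraction is uniform in $R$, giving the required DNC-over-$X'$ function. Applying $\mathsf{RRT}^2_2$ to $c$ then yields $Y$.

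For the reverse direction ($\forall X \exists Y$ DNC over $X' \Rightarrow \mathsf{RRT}^2_2$), let $c$ be 2-bounded, let $X$ code $c$, and let $Y$ be DNC over $X'$. The key combinatorial feature of 2-boundedness is that each pair $\{a, b\} \in [\N]^2$ has at most one ``color-partner'' $\{a', b'\}$ with $c(\{a, b\}) = c(\{a', b'\})$, and this partner map is $X$-computable. I construct an infinite rainbow recursively: at stage $n$, given a finite rainbow $F_n = \{x_0 < \cdots < x_{n-1}\}$, the set of $z > x_{n-1}$ with $F_n \cup \{z\}$ still a rainbow is $X$-computable, while the stronger condition that $F_n \cup \{z\}$ extends to an infinite rainbow is $\Pi^0_2(X)$, hence $\Pi^0_1(X')$. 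The choice of $x_n$ thus must avoid a set of ``bad'' candidates enumerable in $X'$, and the DNC-over-$X'$ function $Y$ provides the diagonalization required to pick a valid extension at each stage, producing an infinite rainbow computable from $Y$.

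The main obstacle is the forward direction. Here, the 2-bounded coloring must simultaneously respect the strict 2-per-color bound and faithfully record the $\Delta^0_2(X)$ approximation to $\langle \Phi^{X'}_e(e) \rangle_e$, in such a way that the stable-value color cannot reappear beyond its quota. The combinatorial bookkeeping --- using fresh singleton-used colors for transient guesses and a single designated pair for the stable value --- must be orchestrated so that the rainbow extraction procedure is uniform and produces a \emph{total} DNC-over-$X'$ function, not merely one that diagonalizes at cofinitely many inputs. The reverse direction is comparatively direct, leveraging the fact that 2-boundedness (rather than a weaker $k$-bound) keeps the extendibility condition at the $\Pi^0_1(X')$ level, so that DNC over $X'$ is just strong enough to navigate the construction.
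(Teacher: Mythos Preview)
The paper does not prove this theorem; it is stated with attribution to Joseph S.\ Miller (unpublished) and a pointer to \cite{DzhafarovMummert22}, Theorem 9.4.13, so there is no in-paper argument to compare against. I will therefore assess your proposal on its own terms.

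Your forward-direction plan is the standard one and is correct in outline: encode a $\Delta^0_2(X)$ approximation to $\langle \Phi_e^{X'}(e)\rangle_e$ into an $X$-computable $2$-bounded coloring so that any infinite rainbow computes a function diagonalizing against it. The substance is in the bookkeeping you allude to, but the strategy is right.

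Your reverse-direction plan has a genuine gap. The assertion that ``$F_n\cup\{z\}$ extends to an infinite rainbow'' is $\Pi^0_2(X)$ is not justified. The tree of finite rainbows extending a given finite rainbow is \emph{not} finitely branching, so one cannot invoke a K\H{o}nig-type compactness to equate ``has an infinite path'' with ``is infinite''; prima facie the extendibility predicate is $\Sigma^1_1(X)$. Worse, even the weaker condition ``$F_n\cup\{z\}$ is a rainbow'' can fail for infinitely many $z$ (take $c(\{0,z\})=c(\{1,z\})$ for all $z\geq 2$; then no $z$ extends $\{0,1\}$), so a naive one-element-at-a-time extension guided by a finite bad set is not available. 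Finally, even granting a $\Pi^0_1(X')$ description of the bad $z$'s, you have not argued that they form a set of computably bounded size at each stage, which is what the escape property of a DNC-over-$X'$ function actually delivers. The proof in \cite{DzhafarovMummert22} does not proceed via global extendibility; it instead exploits $2$-boundedness to produce, stage by stage, an $X'$-c.e.\ set of forbidden next elements whose cardinality is explicitly bounded by a computable function of the stage, so that the DNC-over-$X'$ function can uniformly select outside it. Your plan would need to be reworked along those lines.
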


\begin{definition}
    The \emph{sequential thin set theorem for singletons with finite errors} ($\mathsf{Seq}^*(\mathsf{TS}^1)$) is the following statement: for every sequence $\langle f_i : i \in \omega \rangle$ of functions $\mathbb{N} \to \mathbb{N}$, there exists an infinite set $H \subseteq \mathbb{N}$ such that, for each $i$, there is some $H_i =^* H$ with $f(H_i) \neq \mathbb{N}$.
\end{definition}

\begin{theorem}[\cite{Patey2016} (Lems. 6.1 and 6.2). See \cite{patey_lowness_avoidance} (Ex. 10.1.2)] \label{Patey16}
    The second order parts of $\omega$-models of $\mathsf{RCA}_0 + \mathsf{Seq}^*(\mathsf{TS}^1)$ are precisely those $\mathcal{I}$ such that, for each $X \in \mathcal{I}$, there exists a $Y \in \mathcal{I}$ such that $Y' \text{ is DNC over } X'$.
\end{theorem}

\begin{definition}
    Say that a set $H \subseteq \mathbb{N}$ is \emph{homogeneous} for a string $\sigma \in 2^{< \mathbb{N}}$ if there exists some $c < 2$ such that, for all $i \in H$ with $i < |\sigma|$, we have $\sigma(i) = c$. We say that $H$ is \emph{homogeneous} for a tree $T \subseteq 2^{< \mathbb{N}}$ if the subtree $\{ \sigma \in T : H \text{ is homogeneous for } \sigma \}$ is infinite. \emph{Ramsey-type weak weak Kőnig's lemma} ($\mathsf{RWWKL}$) is the following statement: for every tree $T \subseteq 2^{< \mathbb{N}}$ such that $\lim_n \frac{|\{ \sigma \in 2^n : \sigma \in T \}|}{2^n} > 0$, there exists a set $H$ that is homogeneous for $T$.
\end{definition}

\begin{theorem}[\cite{BienvenuPateyShafer2017} (Thm. 3.4)]
    $\mathsf{RCA}_0 \vdash \mathsf{RWWKL} \leftrightarrow \forall X \ \exists Y \ Y \text{ is DNC over } X$.
\end{theorem}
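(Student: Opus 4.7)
The plan is to prove both directions of the equivalence over $\mathsf{RCA}_0$.

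For the forward direction, fix $X$. I would construct an $X$-computable tree $T \subseteq 2^{<\mathbb{N}}$ of positive lower density with the property that any set homogeneous for $T$ uniformly computes a function DNC over $X$; applying $\mathsf{RWWKL}$ to $T$ then yields such an $H$, and the decoding gives the required $Y$. To build $T$, partition $\mathbb{N}$ into disjoint successive blocks $I_0, I_1, \ldots$ whose sizes $|I_e|$ grow fast enough that $\sum_e 2^{-|I_e|}$ stays well below $1$, so that a block-wise exclusion estimate keeps $T$ of positive lower density. In block $I_e$, an $X$-computable family of ``forbidden'' patterns tied to $\Phi_e^X(e)$ is removed from $T$, and is chosen together with a decoding rule $g$: for any $H$ homogeneous for $T$ with color $c$, the value $Y(e) := g(H \cap I_e, c)$ satisfies $Y(e) \neq \Phi_e^X(e)$, because $Y(e) = \Phi_e^X(e)$ would force every $T$-extension $c$-homogeneous on $H \cap I_e$ past block $I_e$ to lie in the forbidden family, contradicting the infinitude of such extensions.

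For the backward direction, let $T$ be a tree of lower density $\delta > 0$, coded by $A$, and let $Y$ be DNC over $A$. I would build an infinite set $H$ homogeneous for $T$ by a finite-extension argument. Fix a color $c \in \{0,1\}$ initially so that $T^c[\emptyset] = T$ has density $\geq \delta$, and maintain at each stage $s$ the uniform $\Pi^0_1$ invariant that the restricted subtree $T^c[H_s] := \{\sigma \in T : \sigma(i) = c \text{ for all } i \in H_s \cap [0, |\sigma|)\}$ satisfies $|T^c[H_s] \cap 2^n|/2^n \geq \delta \cdot 2^{-s-1}$ for every $n$. At stage $s$, the ``bad'' candidates $n > \max H_s$ that break this invariant form an $A$-c.e. set, uniformly in $s$, so the DNC property of $Y$ picks out a good $h_{s+1}$ by diagonalization; a pigeonhole counting argument at each level shows that valid extensions always exist. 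The resulting $H$ is infinite and satisfies the invariant at every stage, so $T^c[H]$ is infinite, witnessing that $H$ is homogeneous for $T$.

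The main obstacle I anticipate is in the backward direction, namely choosing the density-preservation invariant so that its failure at a given candidate is $\Sigma^0_1(A)$. The uniform ``density above $\delta \cdot 2^{-s-1}$ at \emph{every} length $n$'' formulation accomplishes this, whereas the purely asymptotic ``positive lower density'' does not, so DNC would not apply. In the forward direction, the delicate point is coordinating the block sizes, the forbidden-pattern family, and the decoding rule $g$ so that the density estimate survives while the DNC-extraction succeeds; this combinatorial coordination is the main content of that half of the argument, after which the density estimate and the decoding verification are routine.
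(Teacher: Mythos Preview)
The paper does not supply its own proof of this theorem; it is stated with a citation to \cite{BienvenuPateyShafer2017} and no argument is given. So there is nothing in the paper to compare your proposal against directly.

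That said, your outline is close in spirit to the Bienvenu--Patey--Shafer argument. The forward direction is essentially their construction: encode $\Phi_e^X(e)$ into block $I_e$ so that any homogeneous set decodes a DNC value, and choose block sizes so that the forbidden patterns remove only a summable fraction of measure. For the backward direction your plan is also the right shape---maintain a uniform density lower bound on $T^c[H_s]$ and extend using the DNC function---but one step is under-specified. You write that the bad candidates form an $A$-c.e.\ set and ``the DNC property of $Y$ picks out a good $h_{s+1}$ by diagonalization.'' A raw DNC function only avoids a \emph{single} $A$-partial-computable value per index; to avoid an $A$-c.e.\ \emph{set} of bad candidates you must first amplify DNC to a function $g$ with $g(e,k)\notin W_e^A$ whenever $|W_e^A|\le k$ (a standard consequence of DNC, provable in $\mathsf{RCA}_0$), and then your pigeonhole argument must deliver not merely the existence of a good $h$ but an explicit \emph{a priori} bound on the number of bad ones in a suitable interval, so that the amplified $g$ can be invoked with that bound. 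This is exactly how BPS proceed, and once you insert the amplification step and make the counting bound explicit, your sketch becomes their proof.
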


\begin{definition}
    Say that a family of sets  $\vec{X} = \langle X_i : i \in \omega \rangle$ is \emph{non-trivial} if $X_i \neq \emptyset$ for some $i$. Say that $\vec{X}$ has the \emph{finite intersection property} if, for every nonempty finite set $F \subseteq \omega$, $\bigcap_{i \in F} X_i \neq \emptyset$. The \emph{finite intersection principle} ($\mathsf{FIP}$) is the following statement: for every nontrivial family of sets $\vec{X}$, there exists a subfamily $\vec{Y} \subseteq \vec{X}$ satisfying the finite intersection property that is maximal with respect to having this property.
\end{definition}

\begin{definition}[$1$-genericity]
    Given an infinite set $X \subseteq \mathbb{N}$ and a set $V \subseteq 2^{< \mathbb{N}}$, we say that $X$ \emph{forces} $V$ if there exists some $\sigma \prec X$ such that either $\sigma \in V$ or, for all $\rho \succ \sigma$, $\rho \notin V$. We say that $X$ is \emph{$1$-generic over $D$} if $X$ forces $W_e^D$ for each $e$.
\end{definition}

\begin{theorem}[\cite{CholakDowneyIgusa2017} (Thm. 1.6). See \cite{DzhafarovMummert22} (Thm. 9.10.23)]
    $\mathsf{RCA}_0 \vdash \mathsf{FIP} \leftrightarrow \forall X \ \exists Y \ Y$ is $1$-generic over $X$.
\end{theorem}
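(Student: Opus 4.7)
The plan is to prove the equivalence $\mathsf{FIP} \leftrightarrow \forall X \exists Y\ (Y \text{ is } 1\text{-generic over } X)$ over $\mathsf{RCA}_0$ by splitting into the two implications and exploiting the usual characterization of $1$-genericity in terms of meeting or avoiding every $\Sigma^0_1$ set of strings.

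For the forward direction, given $X$, I would construct an $X$-computable nontrivial family $\vec{A}$ whose maximal FIP subfamilies each uniformly compute a set that is $1$-generic over $X$. For each index $e$ of a $\Sigma^0_1(X)$ set of strings $W_e^X$, I would design a block of sets inside $\vec{A}$ so that maximality of the FIP subfamily forces one of two outcomes: either a set committing the coded generic to having an initial segment in $W_e^X$ is included, or a set committing it to having an initial segment with no extension in $W_e^X$ is included. The sets in these blocks should be chosen so that finite intersections are nonempty precisely when the implied commitments are jointly consistent along a single path in $2^{<\omega}$, which is what allows a maximal FIP subfamily to read off a $1$-generic path.

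For the reverse direction, given a nontrivial family $\vec{X}$, I would use a $1$-generic $Y$ over (a set coding) $\vec{X}$ to build the maximal FIP subfamily. The natural framework is a Cohen-style forcing whose conditions are finite sets $F \subseteq \omega$ such that $\{X_i : i \in F\}$ has the FIP, a property which at finite $F$ reduces to a finite conjunction of $\Sigma^0_1(\vec{X})$ statements. For each index $i$, the set of finite conditions that either extend the current subfamily by $i$ consistently with FIP or else block $i$ from ever being added is $\Sigma^0_1(\vec{X})$ and dense, so the meet-or-avoid property of $Y$ yields a subfamily that is simultaneously FIP and maximal.

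The main obstacle lies in the reverse direction: the predicate ``$\vec{Y}$ is a maximal FIP subfamily'' is a priori $\Pi^0_2(\vec{X})$, one quantifier above what $1$-genericity directly handles. The resolution is to organize the forcing so that each individual decision is $\Sigma^0_1(\vec{X})$, taking advantage of the fact that FIP for a finite collection is witnessed by finitely many elements. Some care is also required in the forward direction to arrange the blocks for distinct $\Sigma^0_1(X)$ sets so that no spurious incompatibilities prevent the existence of maximal FIP subfamilies or destroy the genericity extracted from them.
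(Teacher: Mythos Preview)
The paper does not contain a proof of this statement. It is listed in Section~\ref{comptheochar} among several results that are merely quoted from the literature, with the proof attributed to \cite{CholakDowneyIgusa2017} (and a textbook reference to \cite{DzhafarovMummert22}). There is therefore nothing in the paper to compare your proposal against.

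That said, your outline is broadly in line with how the result is actually established in the cited sources. The reverse direction (from $1$-generics to $\mathsf{FIP}$) is indeed handled by a finite-condition forcing of the kind you describe, and your observation that the FIP of a \emph{finite} subfamily is a finite conjunction of $\Sigma^0_1$ facts is the key point that keeps the relevant dense sets at the $\Sigma^0_1$ level. The forward direction (from $\mathsf{FIP}$ to $1$-generics) is where the real work lies, and your sketch is too vague there: the actual construction of an $X$-computable family whose every maximal FIP subfamily computes a $1$-generic over $X$ requires a careful coding argument, and the historical route went through an intermediate principle ($\Pi^0_1\mathsf{G}$, the existence of a set meeting-or-avoiding every uniformly $\Pi^0_1$ dense collection) before Cholak--Downey--Igusa closed the loop to genuine $1$-genericity. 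Your plan to use ``blocks'' of sets indexed by $\Sigma^0_1(X)$ sets of strings is the right shape, but you have not indicated how to ensure that the maximal subfamily actually \emph{computes} a single coherent path rather than merely recording compatible commitments; that decoding step is nontrivial and is where the cited proof does most of its work.
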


\section{Logical relationships between these principles}
\label{logrel}

In this section, we list all known implications and nonimplications between the principles listed above and provide a proof for one new implication. We also list the three remaining open questions regarding the logical relationships between these principles.

We write $\mathsf{Q} \nleqslant_\omega \mathsf{P}$ if every $\omega$-model satisfying $\mathsf{P}$ also satisfies $\mathsf{Q}$. The following implications and nonimplications appear in the literature.
\begin{theorem}$ $
    \begin{enumerate}
        \item $\mathsf{RCA}_0 \vdash \mathsf{ACA} \to \mathsf{WKL}$ \emph{\cite{Friedman75}};
        \item $\mathsf{RCA}_0 \vdash \mathsf{ACA} \to \mathsf{RRT}^2_2$ \emph{\cite{Specker1971}, \cite{CsimaMileti2009}};
        \item $\mathsf{RCA}_0 \vdash \mathsf{ACA} \to \mathsf{COH}$ \emph{\cite{Specker1971}, [Jockusch and Lempp, unpublished], \cite{Mileti2004}};
        \item $\mathsf{RCA}_0 \vdash \mathsf{ACA} \to \mathsf{FIP}$ \emph{\cite{DzhafarovMummert2013}};
        \item $\mathsf{RCA}_0 \vdash \mathsf{WKL} \to \mathsf{WWKL}$ and $\mathsf{WKL} \nleqslant_\omega \mathsf{WWKL} $ \emph{\cite{YuSimpson1990}};
        \item $\mathsf{RCA}_0 \vdash \mathsf{WWKL} \to  \mathsf{RWWKL}$ and $\mathsf{WWKL} \nleqslant_\omega \mathsf{RWWKL} $ \emph{\cite{GiustoSimpson2000}, \cite{ASKHLS}, \cite{FloodTowsner2016}};
        \item $\mathsf{RCA}_0 \vdash \mathsf{RWWKL} \to \mathsf{AST}$ \emph{\cite{HirschfeldtShoreSlaman2009}, \cite{BienvenuPateyShafer2017}};
        \item $\mathsf{RCA}_0 \vdash \mathsf{RRT^2_2} \to \mathsf{RWWKL}$ and $\mathsf{RRT}^2_2 \nleqslant_\omega \mathsf{RWWKL}$ \emph{[Joseph S. Miller, unpublished], \cite{FloodTowsner2016}};
        \item $\mathsf{RCA}_0 \vdash \mathsf{COH} \to \mathsf{Seq}^*(\mathsf{TS}^1)$ and $\mathsf{COH} \nleqslant_\omega \mathsf{Seq}^*(\mathsf{TS}^1)$ \emph{\cite{Patey2016}};
        \item $\mathsf{RCA}_0 \vdash \mathsf{COH} \to \mathsf{OPT}$ \emph{\cite{HirschfeldtShoreSlaman2009}};
        \item $\mathsf{RCA}_0 \vdash \mathsf{RRT}^2_2 \to \mathsf{OPT}$ \emph{\cite{CsimaMileti2009}, \cite{HirschfeldtShoreSlaman2009}};
        \item $\mathsf{RCA}_0 \vdash \mathsf{RRT^2_2} \to \mathsf{Seq}^*(\mathsf{TS}^1)$ \emph{\cite{Patey2016}};
        \item $\mathsf{RCA}_0 \vdash \mathsf{Seq}^*(\mathsf{TS}^1) \to \mathsf{AST}$ \emph{\cite{Patey2016}};
        \item $\mathsf{RCA}_0 \vdash \mathsf{FIP} \to \mathsf{OPT}$ and $\mathsf{FIP} \nleqslant_\omega \mathsf{OPT}$ \emph{\cite{Sacks1963}, \cite{MillerMartin1968}, \cite{HirschfeldtShoreSlaman2009}, \cite{DzhafarovMummert2013}, \cite{CholakDowneyIgusa2017}};
        \item $\mathsf{RCA}_0 \vdash \mathsf{OPT} \to \mathsf{AST}$ \emph{\cite{HirschfeldtShoreSlaman2009}};
        \item $\mathsf{Seq}^*(\mathsf{TS}^1) \nleqslant_\omega \mathsf{WKL}$ \emph{\cite{JockuschSoare72}};
        \item $\mathsf{OPT} \nleqslant_\omega \mathsf{WKL}$ \emph{\cite{HirschfeldtShoreSlaman2009}};
        \item $\mathsf{WWKL} \nleqslant_\omega \mathsf{RRT}^2_2$ \emph{\cite{CsimaMileti2009}, \cite{Liu2015}};
        \item $\mathsf{COH} \nleqslant_\omega \mathsf{RRT}^2_2$ \emph{\cite{vanlambalgen1987}, \cite{CsimaMileti2009}, \cite{patey:tel-01888675}};
        \item $\mathsf{RWWKL} \nleqslant_\omega \mathsf{COH}$ \emph{\cite{HJKLS}, \cite{BienvenuPateyShafer2017}};
        \item $\mathsf{Seq^*(TS^1)} \nleqslant_\omega \mathsf{FIP}$ \emph{\cite{Patey2016}, \cite{CholakDowneyIgusa2017}, \cite{patey_lowness_avoidance}};
        \item $\mathsf{RWWKL} \nleqslant_\omega \mathsf{FIP}$ \emph{\cite{HirschfeldtShore2007}, \cite{Conidis2008}, \cite{HirschfeldtShoreSlaman2009}, \cite{DzhafarovMummert2013}, \cite{BienvenuPateyShafer2017}};
        \item $\mathsf{Seq}^*(\mathsf{TS}^1) \nleqslant_\omega \mathsf{OPT}$  \emph{\cite{HirschfeldtShoreSlaman2009}, \cite{Patey2016}};
        \item $\mathsf{AST} \nleqslant_\omega \mathsf{RCA}_0$ \emph{\cite{HirschfeldtShoreSlaman2009}}.
    \end{enumerate}
\end{theorem}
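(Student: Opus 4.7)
The plan is to exhibit a concrete $\omega$-model of $\mathsf{RCA}_0$ in which $\mathsf{AST}$ fails. By the computability-theoretic characterization of $\mathsf{AST}$ stated earlier, namely $\mathsf{RCA}_0 \vdash \mathsf{AST} \leftrightarrow \forall X \exists Y \ Y \nleq_{\mathrm{T}} X$, it suffices to produce an $\omega$-model whose second-order part contains some set that computes every element of the model.

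The natural candidate is $\mathrm{REC}$, the $\omega$-model whose second-order part consists precisely of the computable sets. By Friedman's characterization (part (1) of the theorem quoted in Section \ref{comptheochar}), the $\omega$-models of $\mathsf{RCA}_0$ are exactly the Turing ideals, and the computable sets form a Turing ideal (they are downward closed under $\leq_{\mathrm{T}}$ and closed under effective join). Hence $\mathrm{REC}$ is an $\omega$-model of $\mathsf{RCA}_0$.

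To see that $\mathrm{REC}$ does not satisfy $\mathsf{AST}$, take $X = \emptyset$, which is certainly in $\mathrm{REC}$. Every set $Y$ in $\mathrm{REC}$ is computable, hence satisfies $Y \leq_{\mathrm{T}} \emptyset = X$. Thus no $Y$ in the model witnesses $Y \nleq_{\mathrm{T}} X$, so $\forall X \exists Y \ Y \nleq_{\mathrm{T}} X$ fails in $\mathrm{REC}$, and therefore $\mathsf{AST}$ fails as well.

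There is no real obstacle here; the only subtlety is invoking the equivalence between $\mathsf{AST}$ and its computability-theoretic form (which has already been stated), and noting that $\mathrm{REC}$ is an $\omega$-model of $\mathsf{RCA}_0$. Every other implication in the preceding theorem required a nontrivial construction, but this last nonimplication reduces to the trivial observation that the minimal $\omega$-model lacks any noncomputable set.
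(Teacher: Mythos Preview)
The paper does not prove this theorem at all; it is presented as a survey of results from the literature, with each of the 23 items accompanied only by citations. There is therefore no ``paper's own proof'' to compare against.

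Your proposal addresses only item (23), $\mathsf{RCA}_0 \ngeqslant_\omega \mathsf{AST}$, and you explicitly acknowledge that the remaining items are nontrivial results established elsewhere. For item (23) your argument is correct and is the canonical one: $\mathrm{REC}$ is a Turing ideal, hence an $\omega$-model of $\mathsf{RCA}_0$, and in $\mathrm{REC}$ the instance $X = \emptyset$ has no solution since every $Y \in \mathrm{REC}$ satisfies $Y \leq_\text{T} \emptyset$. Invoking the equivalence $\mathsf{AST} \leftrightarrow \forall X\,\exists Y\,Y \nleq_\text{T} X$ (already stated in the paper) then finishes it. This is exactly the reasoning one would expect and is implicit in the cited source \cite{HirschfeldtShoreSlaman2009}.

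One small remark: strictly speaking, you do not even need the computability-theoretic characterization of $\mathsf{AST}$ to see that it fails in $\mathrm{REC}$; one can exhibit a computable complete atomic theory with a computable subenumeration of types but no computable atomic model (this is the content of the cited result). Your route via the characterization is cleaner given what the paper has already stated, but it does rely on that earlier theorem rather than being self-contained.
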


We simultaneously strengthen Parts (10) and (11) above as follows.

\begin{proposition}
    $\mathsf{RCA}_0 \vdash \mathsf{Seq}^*(\mathsf{TS}^1) \to \mathsf{OPT}$.
\end{proposition}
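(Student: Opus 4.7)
The plan is to use the computability-theoretic characterizations: by Theorem \ref{Patey16}, $\mathsf{Seq}^*(\mathsf{TS}^1)$ is equivalent to ``$\forall X \exists Y : Y'$ is DNC over $X'$,'' and by \cite{HirschfeldtShoreSlaman2009}, $\mathsf{OPT}$ is equivalent to ``$\forall X \exists Y : Y$ is hyperimmune over $X$.'' Given $X$, I apply the first characterization to obtain $Y$ together with a $Y'$-computable function $f$ that is DNC over $X'$. The goal is then to produce a function $g \leq_T Y'$ not dominated by any $X$-computable function, from which a hyperimmune-over-$X$ set is built.

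I will take $g(e) = \langle u(e), t(e), f(e) \rangle$, where $u(e)$ and $t(e)$ are the use on $Y'$ and the halting time of the reduction computing $f(e)$. Supposing for contradiction that $g(e) \leq h(e)$ for all $e$, with $h$ an $X$-computable function, the value $f(e)$ must lie in the $X$-computable set
\[
A_e = \{\Phi^{\sigma}(e)[h(e)] : \sigma \in 2^{h(e)},\ \Phi^{\sigma}(e)[h(e)] \downarrow\},
\]
which has size at most $2^{h(e)}$. This exhibits $f$ as $X$-computably traceable with an $X$-computable trace-size bound. The relativization of the Kjos-Hanssen theorem — that an $X$-computably traceable function is not of DNC-over-$X$ degree — then forces $f$ not to be of DNC-over-$X$ degree. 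But any function DNC over $X'$ is of DNC-over-$X$ degree by a standard padding argument: since every $X$-computable partial function is $X'$-computable via a computable index translation $\iota$, and since $n \mapsto \Phi_n^X(n)$ admits a constant-in-$m$ s-m-n-index $s(n)$, the function $e \mapsto f(\iota(s(e)))$ is $\leq_T f$ and DNC over $X$. This yields the desired contradiction.

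Hence $g$ is not $X$-dominated, and a hyperimmune-over-$X$ set follows by taking, for example, $Z = \{\sum_{i \leq n}(g(i)+1) : n \in \omega\}$, whose principal function majorizes $g$. The main obstacle will be the $\mathsf{RCA}_0$-level formalization: since $g$ is only $\Delta^{0,Y}_2$ (rather than $Y$-computable), the set $Z$ is not automatically available in $\mathsf{RCA}_0$. I plan to handle this by reproducing the Hirschfeldt-Shore-Slaman Henkin-style construction of a type-omitting model directly from $g$, using $g$ as a black-box modulus to bound searches for type-denying formulas at each stage, rather than invoking their characterization as a black box.
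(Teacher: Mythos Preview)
There are two genuine gaps.

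First, the ``Kjos-Hanssen theorem'' step fails. From the assumption that $g$ is $X$-dominated by $h$ you correctly extract that $f(e)$ lies in an $X$-computable finite set $A_e$ of size at most $2^{h(e)}$, but this does \emph{not} force $f$ to fail to be of DNC-over-$X$ degree. The traceability notion that implies ``not of DNC degree'' is a property of \emph{degrees} (every function computable from the degree admits a trace with a single fixed order bound), not of a single function: a $\{0,1\}$-valued function that is DNC over $X$ is trivially traced by the constant sequence $A_e=\{0,1\}$ and yet is itself DNC over $X$. So the intended contradiction does not materialize, and you have not established that $g$ escapes $X$-domination.

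Second, and more structurally, even granting some $g\leq_\text{T} Y'$ not dominated by any $X$-computable function, this is a fact about $Y'$, not about any set guaranteed to lie in the model. Your proposed fix---running the Hirschfeldt--Shore--Slaman Henkin construction with $g$ as a search-bound modulus---does not help: that construction queries the actual values $g(n)$ to decide when to stop searching at each stage, so its output (the elementary diagram of the omitting model) is computable only from $X\oplus g$, hence from $Y'$. Nothing guarantees this lies in the $\mathsf{RCA}_0$-model, and a $Y$-computable limit approximation to $g$ carries no non-domination guarantee of its own. The paper sidesteps both problems by working directly on the $\mathsf{Seq}^*(\mathsf{TS}^1)$ side rather than through the characterizations: it builds a single $X$-computable instance $\langle f_e\rangle$, with block structure tied to the convergence times of the $\Phi_e^X$, so that \emph{any} solution $H$---which is in the model by hypothesis---is itself hyperimmune over $X$, the witnessing non-dominated function being $H$-computable.
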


\begin{proof}
    We gently modify the proof of Corollary 5.9 given in \cite{HirschfeldtShoreSlaman2009}. Let $s^e_n$ be the least $s$ such that $\Phi_e \upharpoonright (n +1)[s] \downarrow$, if one exists. For each $e$, we have $s_n^e \leq s_{n+1}^e$. Assume without loss of generality that $\Phi_e(n) \leq s_n^e$. Inductively define $r_0^e := s_0^e$ and $r_{n+1}^e := s^e_{\sum_{i=0}^n r_i^e}$. Define $R_n^e := \sum_{i = 0}^n r_i^e$.
    
    For each $e$, define $f_e : \mathbb{N} \to \mathbb{N}$ as 
    $$f_e(x):=
    \begin{cases}
        0 & 0 \leq x < R_0^e \\
        0 & R_0^e \leq x < R_1^e \\
        1 & R_1^e \leq x < R_2^e \\
        0 & R_2^e \leq x < R_3^e \\
        1 & R_3^e \leq x < R_4^e \\
        2 & R_4^e \leq x < R_5^e \\
        0 & R_5^e \leq x < R_6^e \\
        1 & R_6^e \leq x < R_7^e \\
        2 & R_7^e \leq x < R_8^e \\
        3 & R_8^e \leq x < R_9^e \\
        \vdots & \quad \quad \ \ \ \vdots
    \end{cases}$$
    if $\Phi_e$ is total and as
    $$f_e(x) :=
    \begin{cases}
        0 & 0 \leq x < R_0^e \\
        0 & R_0^e \leq x < R_1^e \\
        1 & R_1^e \leq x < R_2^e \\
        0 & R_2^e \leq x < R_3^e \\
        1 & R_3^e \leq x < R_4^e \\
        2 & R_4^e \leq x < R_5^e \\
        \vdots & \quad \quad \ \ \ \vdots \\
        0 & R_{n-1}^e \leq x
    \end{cases}
    $$
    if $n$ is the least number such that $r^e_n$ is undefined. The sequence $\langle f_e : e \in \omega \rangle$ is computable. Let $H$ be a $\mathsf{Seq}^*(\mathsf{TS}^1)$-solution to $\langle f_e : e \in \omega \rangle$. Let $g$ be the $H$-computable function defined by letting $g(n)$ be the least $m$ such that the sequence giving the characteristic function of $H$ has at least $n$ zeroes and at least $n$ ones among its first $m$ bits. We claim that $g$ is not dominated by any total computable function. Let $\Phi_e$ be total. Since $H$ is a $\mathsf{Seq}^*(\mathsf{TS}^1)$-solution to $\langle f_e : e \in \omega \rangle$, there exists some $k \in \mathbb{N}$ and some $H_0 =^* H$ such that $k \notin f_e(H_0)$. By the definition of $f_e$, for any $m$, we can find $y > x > m$ such that $x, y \in H$, there is no element of $H$ strictly between $x$ and $y$, and for some $n$ we have $x < \sum_{i=0}^n r_i^e$ and $y \geq \sum_{i=0}^{n+1} r_i^e$. Then $g(x) > y > r_{n+1}^e = s^e_{\sum_{i=0}^n r_i^e} \geq s_x^e \geq \Phi_e(x)$. Taking $m$ large enough, we see that $\Phi_e$ does not dominate $g$, which means that $H$ is hyperimmune.
\end{proof}

These implications and nonimplications leave open only the following questions about these principles.

\begin{question}$ $
    \begin{enumerate}
        \item Does $\mathsf{RCA}_0 \vdash \mathsf{RRT}^2_2 \to \mathsf{FIP}$?
        \item Does $\mathsf{RCA}_0 \vdash \mathsf{COH} \to \mathsf{FIP}$?
        \item Does $\mathsf{RCA}_0 \vdash \mathsf{Seq}^*(\mathsf{TS}^1) \to \mathsf{FIP}$?
        \item Do any of the above possible implications hold over $\omega$-models?
    \end{enumerate}
\end{question}

One partial step towards answering the second of these has been taken—in Corollary 12.8 of \cite{BrattkaHendtlassKreuzer2017}, Brattka, Hendtlass and Kreuzer prove that the principle $1\text{-}\mathsf{GEN}$ (which is equivalent to $\mathsf{FIP}$ over $\mathsf{RCA}_0$) cannot be reduced to $\mathsf{COH}$ via a uniform (i.e., Weihrauch) reduction. For a further analysis of computability-theoretic principles from the standpoint of Weihrauch reducibility, see \cite{BrattkaHendtlassKreuzer2017}.

\section{Iterated jumps of \texorpdfstring{$\mathsf{AST}$}{AST}, \texorpdfstring{$\mathsf{DNR}$}{DNR}, 
\texorpdfstring{$\mathsf{WWKL}$}{WWKL}, and \texorpdfstring{$\mathsf{WKL}$}{WKL}} 
\label{iterjumpssection}

\begin{definition} Fix $m \in \omega$.
    \begin{enumerate}
        \item Let $\mathsf{AST}^{(m)}$ denote the following statement: $\forall X \ \exists Y \ Y^{(m)} \nleq_\text{T} X^{(m)}$.
        \item Let $\mathsf{DNR}^{(m)}$ denote the following statement: $\forall X \ \exists Y \ Y^{(m)} \ \text{DNC over } X^{(m)}$.
        \item Let $\mathsf{WWKL}^{(m)}$ denote the following statement: $\forall X \ \exists Y \ Y^{(m)} \text{ computes a $1$-random over } X^{(m)}$.
        \item Let $\mathsf{WKL}^{(m)}$ denote the following statement: $\forall X \ \exists Y \ Y^{(m)} \gg X^{(m)}$.
    \end{enumerate}    
\end{definition}
We include the word ``computes'' in the definition of $\mathsf{WWKL}^{(m)}$ since \emph{computing} a $1$-random is a property of degrees (whereas \emph{being} a $1$-random is a property of sets). We are already aquatinted with these principles in the case where $m = 0$:
\begin{itemize}
    \item $\mathsf{AST}^{(0)} \equiv_{\mathsf{RCA}_0} \mathsf{AST}$,
    \item $\mathsf{DNR}^{(0)} \equiv_{\mathsf{RCA}_0} \mathsf{RWWKL}$,
    \item $\mathsf{WWKL}^{(0)} \equiv_{\mathsf{RCA}_0} \mathsf{WWKL}$,
    \item $\mathsf{WKL}^{(0)} \equiv_{\mathsf{RCA}_0} \mathsf{WKL}$.
\end{itemize}
Additionally, we have seen two of these principles in the case where $m = 1$:
\begin{itemize}
    \item $\mathsf{RCA}_0 \vdash \mathsf{DNR}^{(1)} \leftrightarrow \mathsf{Seq}^*(\mathsf{TS})$,
    \item $\mathsf{RCA}_0 \vdash \mathsf{WKL}^{(1)} \leftrightarrow \mathsf{COH}$.
\end{itemize}

Just as $\mathsf{AST}$ is the weakest natural (i.e., relativizing) principle not true in the $\omega$-model consisting entirely of computable sets; $\mathsf{AST}^{(m)}$ is the weakest natural principle not true in any $\omega$-model consisting entirely of low$_m$ sets. The principle $\mathsf{WKL}^{(m)}$ appears in \cite{FKWY2024} (as $\Delta^0_{m+1}\text{-}\mathsf{WKL}$) and in \cite{Belanger2022} (as $(m+1)\text{-}\mathsf{WKL}$).

Let us begin by observing the immediate logical relationships between these principles. The halting set is of PA degree. By \cite{MartinLof66}, every set that is of PA degree computes a $1$-random. By \cite{GiustoSimpson2000}, every $1$-random computes a DNC function. Every DNC set is in particular noncomputable. Since all of these results relativize, the following implications hold.

\begin{proposition}
    Let $m \in \mathbb{N}$. We have
    \begin{itemize}
        \item $\mathsf{RCA}_0 \vdash \mathsf{ACA} \to \mathsf{WKL}^{(m)}$,
        \item $\mathsf{RCA}_0 \vdash \mathsf{WKL}^{(m)} \to \mathsf{WWKL}^{(m)}$,
        \item $\mathsf{RCA}_0 \vdash \mathsf{WWKL}^{(m)} \to \mathsf{DNR}^{(m)}$, and
        \item $\mathsf{RCA}_0 \vdash \mathsf{DNR}^{(m)} \to \mathsf{AST}^{(m)}$.
    \end{itemize}
\end{proposition}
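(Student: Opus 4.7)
The plan is to relativize to the oracle $X^{(m)}$ the four classical facts cited in the paragraph immediately preceding the proposition statement: (i) $X'$ is of PA degree over $X$; (ii) every set of PA degree over $X$ computes a $1$-random over $X$ (Martin-Löf); (iii) every $1$-random over $X$ computes a function DNC over $X$ (Giusto-Simpson); and (iv) every function DNC over $X$ is not computable from $X$. Each of the four implications then follows the same template: given $X$, apply the hypothesis principle (or $\mathsf{ACA}$ in the first case) to extract a candidate $Y$, then verify the required property of $Y^{(m)}$ by invoking the appropriate relativized fact.

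Concretely, for $\mathsf{ACA} \to \mathsf{WKL}^{(m)}$ I would set $Y := X'$, which exists by $\mathsf{ACA}$ (closure under Turing jump being precisely what characterizes $\omega$-models of $\mathsf{ACA}_0$); then $Y^{(m)} = X^{(m+1)}$, and fact (i) relativized to $X^{(m)}$ gives $X^{(m+1)} \gg X^{(m)}$. For $\mathsf{WKL}^{(m)} \to \mathsf{WWKL}^{(m)}$, apply the hypothesis to $X$ to obtain $Y$ with $Y^{(m)} \gg X^{(m)}$ and then invoke (ii) relativized to $X^{(m)}$. For $\mathsf{WWKL}^{(m)} \to \mathsf{DNR}^{(m)}$, apply the hypothesis to obtain $Y$ with $Y^{(m)}$ computing a $1$-random over $X^{(m)}$, and use (iii) to compute from $Y^{(m)}$ a function that is DNC over $X^{(m)}$. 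Finally, for $\mathsf{DNR}^{(m)} \to \mathsf{AST}^{(m)}$, apply the hypothesis to extract $Y$ such that $Y^{(m)}$ computes some $f$ that is DNC over $X^{(m)}$; by (iv), $f \nleq_\text{T} X^{(m)}$, and since $f \leq_\text{T} Y^{(m)}$ this forces $Y^{(m)} \nleq_\text{T} X^{(m)}$.

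No step constitutes a genuine obstacle: each of the facts (i)--(iv) is provable in $\mathsf{RCA}_0$ and relativizes uniformly, so the proof is essentially a mechanical unpacking of the definitions. The only mild point requiring attention is the first implication, where $\mathsf{ACA}$ must be invoked explicitly to ensure that the iterated jump $X^{(m+1)}$ exists inside the model (one may carry this out by external induction on $m$, producing $X', X'', \ldots, X^{(m+1)}$ in turn from $\mathsf{ACA}$). This reflects the essential use of $\mathsf{ACA}$, since closure under the Turing jump is not available over $\mathsf{RCA}_0$ alone.
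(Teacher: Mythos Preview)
Your proposal is correct and follows essentially the same approach as the paper: the paper simply records the four facts (halting set is PA, PA computes a $1$-random, $1$-random computes DNC, DNC is noncomputable), notes that they relativize, and states the proposition without further argument. Your write-up is just a more explicit unpacking of that same relativization template, including the natural choice $Y := X'$ for the first implication.
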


Since $Y \leq_\text{T} X$ implies $Y' \leq_\text{T} X'$, the following implication also holds.

\begin{proposition} Let $m < n$. We have $\mathsf{RCA}_0 \vdash \mathsf{AST}^{(n)} \to \mathsf{AST}^{(m)}$.
\end{proposition}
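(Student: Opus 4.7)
The plan is to prove this by direct application of $\mathsf{AST}^{(n)}$ together with the monotonicity of Turing reducibility under jumps. Fix $X$. Apply $\mathsf{AST}^{(n)}$ to $X$ itself to produce a set $Y$ with $Y^{(n)} \nleq_{\text{T}} X^{(n)}$. We then claim $Y$ also witnesses $\mathsf{AST}^{(m)}$ for the input $X$, i.e., $Y^{(m)} \nleq_{\text{T}} X^{(m)}$.

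The argument is by contrapositive. It is a standard fact, provable in $\mathsf{RCA}_0$, that $A \leq_{\text{T}} B$ implies $A' \leq_{\text{T}} B'$: any fixed index $e$ with $\Phi_e^{B} = A$ can be uniformly converted into an index $e'$ with $\Phi_{e'}^{B'} = A'$, since with oracle $B'$ one can decide $\Sigma^0_1(B)$-facts, and in particular whether $\Phi_i^A(i) \downarrow$ for each $i$. Applying this single-step fact $n-m$ times (a finite external iteration, with $n-m$ a standard natural number, so there is no induction issue in $\mathsf{RCA}_0$) gives the following: if $Y^{(m)} \leq_{\text{T}} X^{(m)}$, then $Y^{(m+1)} \leq_{\text{T}} X^{(m+1)}$, then $Y^{(m+2)} \leq_{\text{T}} X^{(m+2)}$, and so on up to $Y^{(n)} \leq_{\text{T}} X^{(n)}$. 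This contradicts the choice of $Y$.

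There is no real obstacle here; the only mild subtlety is bookkeeping around the fact that in $\mathsf{RCA}_0$ the sets $X^{(m)}, Y^{(m)}, X^{(n)}, Y^{(n)}$ need not exist as second-order objects, so the statements $Y^{(m)} \leq_{\text{T}} X^{(m)}$ and $Y^{(n)} \nleq_{\text{T}} X^{(n)}$ must be read as their standard arithmetical formalizations via Post's theorem (as mentioned by the author just below the definition of $\mathsf{AST}^{(m)}$). Under this reading the implication $A \leq_{\text{T}} B \Rightarrow A' \leq_{\text{T}} B'$ becomes a purely syntactic statement about indices of reductions and $\Sigma^0_1$-predicates, and its finite iteration is immediate.
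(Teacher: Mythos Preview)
Your proof is correct and follows exactly the approach the paper indicates: the paper simply remarks that since $Y \leq_\text{T} X$ implies $Y' \leq_\text{T} X'$, the implication follows, and you have spelled out this contrapositive iteration (together with the appropriate caveat about formalizing iterated jumps in $\mathsf{RCA}_0$) in detail.
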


In \cite{FKWY2024}, it is proven that $\mathsf{RCA}_0^* \vdash \mathsf{WKL}^{(m)} \to \mathsf{B}\Sigma^0_{m+1}$. In personal communcation, Kołodziejczyk pointed out that, since the tree constructed in that proof has positive measure, we actually have that $\mathsf{RCA}_0^* \vdash \mathsf{WWKL}^{(m)} \to \mathsf{B}\Sigma^0_{m+1}$. In \cite{FKWY2024}, it is also proven that $\mathsf{WKL}^{(m)}$ is $\Pi^1_1$-conservative over $\mathsf{RCA}_0^* + \mathsf{B}\Sigma^0_{m+1}$, which implies that the implication stated above is as strong as possible. On the other hand, since any sufficiently Cohen generic is not low$_m$, one can prove that $\mathsf{AST}^{(m)}$ is $\Pi^1_1$-conservative over $\mathsf{RCA}_0$. These results together give us a thorough understanding of the first-order parts of $\mathsf{WKL}^{(m)}$, $\mathsf{WWKL}^{(m)}$, and $\mathsf{AST}^{(m)}$, but it would be nice to know more about the first-order part of $\mathsf{DNR}^{(m)}$.

Additionally, Kołodziejczyk has made the interesting observation that Theorem 3.9 from \cite{FKWY2024} implies that, for any $m$ and any $n$, we have $\mathsf{RCA}_0 \vdash \mathsf{WKL}^{(m)} + \neg \mathsf{I}\Sigma^0_{m+1} \to \mathsf{AST}^{(n)}$, an implication that contrasts with Theorem \ref{ASTnvsWKLm} below.

In \cite{Belanger2022}, Belanger points out that the principles $\mathsf{WKL}^{(m)}$ are logically independent from each other over $\omega$-models of $\mathsf{RCA}_0$, an assertion that we prove below and expand on via the following four separations, which give us the complete understanding summarized above in Figure \ref{fig:iterjumps}.

\begin{theorem} \label{WKLmvsWWKLm}
    Let $m \in \mathbb{N}$. We have $\mathsf{WKL}^{(m)} \nleqslant_\omega \mathsf{WWKL}^{(m)}$.
\end{theorem}

\begin{theorem} \label{WWKLmvsDNRm}
    Let $m \in \mathbb{N}$. We have $\mathsf{WWKL}^{(m)} \nleqslant_\omega \mathsf{DNR}^{(m)} $.
\end{theorem}

\begin{theorem}
\label{ASTnvsWKLm}
    Let $m < n$. We have $\mathsf{AST}^{(n)} \nleqslant_\omega \mathsf{WKL}^{(m)}$.
\end{theorem}

\begin{theorem}
\label{DNRmvsWKLn}
    Let $m < n$. We have $\mathsf{DNR}^{(m)} \nleqslant_\omega \mathsf{WKL}^{(n)} $.
\end{theorem}

\section{Proofs of separations}
\label{proofssection}

\subsection{Base cases: \texorpdfstring{$m = 0$ and $n = 1$}{m=0 and n=1}}

All four of these separations are already known in the case where $m = 0$ and $n = 1$. However, their earliest and best-known proofs do not necessarily lend themselves to the kind of generalization that we need. To obtain our generalizations, we expand on proofs of these separations in the following abstract framework, which was first isolated in \cite{patey:tel-01888675}, but the idea underlying which had previously appeared implicitly in various places, such as \cite{lst}.

An \emph{instance-solution problem} is a function $\mathsf{P} : \mathcal{P}(\omega) \to \mathcal{P}(\mathcal{P}(\omega))$. We refer to sets $X \in \mathcal{P}(\omega)$ as \emph{instances of $\mathsf{P}$} and sets $Y \in \mathsf{P}(X)$ as \emph{solutions to $X$}. Any mathematical principle of the form ``for all...there exists...'' can be regarded as an instance-solution problem in a natural way.

\begin{definition}
    A class $\mathcal{W} \subseteq 2^\omega$ is called a \emph{weakness property} if it is downward closed under Turing reduction. An instance-solution problem $\mathsf{P}$ is said to \emph{preserve} a weakness property $\mathcal{W}$ if, for every $Z \in \mathcal{W}$ and every $Z$-computable instance $X \in \mathcal{W}$, there is a solution $Y$ to $X$ such that $Z \oplus Y \in \mathcal{W}$.
\end{definition}

\begin{lemma}[\cite{patey:tel-01888675} (Lem. 3.4.2). See \cite{DzhafarovMummert22} (Thm. 4.6.13)] \label{separation-thm}
    Let $\mathsf{P}$ and $\mathsf{Q}$ be instance-solution problems and $\mathcal{W}$ be a weakness property. If $\mathsf{P}$ preserves $\mathcal{W}$ and $\mathsf{Q}$ does not, then $\mathsf{Q} \nleqslant_\omega \mathsf{P}$.
\end{lemma}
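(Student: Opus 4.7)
The plan is to construct, using the $\mathsf{P}$-preservation hypothesis, an $\omega$-model $\mathcal{M}$ of $\mathsf{P}$ whose entire second-order part $\mathcal{I}$ lies inside $\mathcal{W}$; the witnesses of $\mathsf{Q}$'s failure to preserve $\mathcal{W}$ will then prevent $\mathcal{M}$ from satisfying $\mathsf{Q}$.

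First I would fix a $Z \in \mathcal{W}$ together with a $Z$-computable instance $X^{\star}$ of $\mathsf{Q}$ (automatically in $\mathcal{W}$ by downward closure) such that $Z \oplus Y \notin \mathcal{W}$ for every $\mathsf{Q}$-solution $Y$ to $X^{\star}$, supplied by the failure of $\mathsf{Q}$-preservation. Next I would build an ascending chain $Z = Z_0 \leq_{\mathrm{T}} Z_1 \leq_{\mathrm{T}} \cdots$ of sets in $\mathcal{W}$ so that the ideal $\mathcal{I} := \{A \subseteq \omega : \exists s\ A \leq_{\mathrm{T}} Z_s\}$ is closed under $\mathsf{P}$-solutions. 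Fix a bookkeeping $\langle (e_s, t_s) : s \in \omega \rangle$ with $t_s \leq s$ in which every pair $(e, t) \in \omega \times \omega$ appears infinitely often. At stage $s$, if $\Phi_{e_s}^{Z_{t_s}}$ is total, call the set it codes $X_s$; then $X_s \leq_{\mathrm{T}} Z_s \in \mathcal{W}$, so $X_s \in \mathcal{W}$, and applying $\mathsf{P}$-preservation to the pair $(Z_s, X_s)$ yields a solution $Y_s \in \mathsf{P}(X_s)$ with $Z_s \oplus Y_s \in \mathcal{W}$; set $Z_{s+1} := Z_s \oplus Y_s$. Otherwise set $Z_{s+1} := Z_s$.

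Every $Z_s$ lies in $\mathcal{W}$ by construction, so $\mathcal{I} \subseteq \mathcal{W}$ by downward closure, and $\mathcal{I}$ is a Turing ideal (closure under effective join is immediate from monotonicity of the chain). Every $\mathsf{P}$-instance $X \in \mathcal{I}$ equals $\Phi_e^{Z_t}$ for some $e, t$, and infinite recurrence of $(e, t)$ in the bookkeeping forces a corresponding solution into $\mathcal{I}$; thus the $\omega$-model $\mathcal{M}$ with second-order part $\mathcal{I}$ satisfies $\mathsf{P}$. However, $X^{\star} \in \mathcal{I}$ is a $\mathsf{Q}$-instance that cannot have a $\mathsf{Q}$-solution in $\mathcal{I}$, since any $Y \in \mathcal{I}$ would yield $Z \oplus Y \in \mathcal{I} \subseteq \mathcal{W}$, violating the choice of $X^{\star}$. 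Therefore $\mathcal{M} \not\models \mathsf{Q}$, giving $\mathsf{P} \ngeqslant_\omega \mathsf{Q}$.

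The main subtlety, rather than a genuine obstacle, is the bookkeeping argument ensuring that every $\mathsf{P}$-instance eventually appearing in $\mathcal{I}$ is actually recognized and solved at some stage, which relies on indexing pairs $(e, t)$ so that each occurs infinitely often past stage $t$. The downward closure of $\mathcal{W}$ is also used quietly throughout, to promote $Z_s$-computable sets into $\mathcal{W}$ so that the two-input form of the preservation hypothesis always applies.
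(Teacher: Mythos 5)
Your proposal is correct, and it is essentially the standard argument from the cited sources (Patey's thesis, Dzhafarov--Mummert Thm.\ 4.6.13); the paper itself only cites this lemma without proof. The iterated application of preservation along a bookkept chain of uniformly $\mathcal{W}$-bounded sets, followed by using the witness of $\mathsf{Q}$'s non-preservation to defeat $\mathsf{Q}$ in the resulting Turing ideal, is exactly the intended construction, and all the details (downward closure promoting $Z_s$-computable instances into $\mathcal{W}$, closure of the ideal under join so that $Z\oplus Y\in\mathcal{I}\subseteq\mathcal{W}$) are handled correctly.
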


\begin{proposition}
    \label{WKL0vsWWKL0}
    $\mathsf{WKL}^{(0)} \nleqslant_\omega \mathsf{WWKL}^{(0)}$.
\end{proposition}

\begin{proof}
    To prove this separation, one can use the obvious weakness property; take $\mathcal{W} := \{ S : S \not\gg \emptyset \}$. By definition, $\mathsf{WKL}^{(0)}$ does not preserve $\mathcal{W}$. In slides 79-82 of \cite{Patey2025}, Patey proves that any set sufficiently generic for Solovay forcing is not of PA degree (even when combined with a a fixed set $Z$ not of PA degree). In Theorem II.3.5 of \cite{Kautz1991}, Kautz proves that any set sufficiently generic for Solovay forcing is $1$-random. Combining these results proves that $\mathsf{WWKL}^{(0)}$ does not preserve $\mathcal{W}$, as needed.
\end{proof}

\begin{proposition}
    \label{WWKL0vsDNR0}
    $\mathsf{WWKL}^{(0)} \nleqslant_\omega \mathsf{DNR}^{(0)}$.
\end{proposition}

To prove this separation, it is actually convenient to use a weakness property other than the obvious one. For any function $h : \omega \to \omega$, say that a function $g$ is \emph{$h$-DNC over $D$} if $g$ is DNC over $D$ and $g(x) < h(x)$ for all $x$. Say that a set $X$ is \emph{$h$-DNC over $D$} if $X$ computes a function that is $h$-DNC over $D$.

\begin{proof}
    Take $\mathcal{W} := \{ S : S \text{ is not }(x \mapsto 2^x)\text{-DNC over } \emptyset \}$. By Theorem 1.4 from \cite{ASKHLS}, $\mathsf{WWKL}^{(0)}$ does not preserve $\mathcal{W}$. By Theorem 3.7 from \cite{KhanMiller17}, $\mathsf{DNR}^{(0)}$ preserves $\mathcal{W}$, as needed.
\end{proof}

\begin{proposition}
    \label{AST1vsWKL0}
    $\mathsf{AST}^{(1)} \nleqslant_\omega \mathsf{WKL}^{(0)}$.
\end{proposition}

\begin{proof}
    This separation can be proven with the obvious weakness property; take $\mathcal{W} := \{ S : S' \leq_\text{T} \emptyset' \}$. By definition, $\mathsf{AST}^{(1)}$ does not preserve $\mathcal{W}$. By an appropriate relativization of \cite{JockuschSoare72}'s low basis theorem, $\mathsf{WKL}^{(0)}$ preserves $\mathcal{W}$, as needed.
\end{proof}

\begin{proposition}
    \label{DNR0vsWKL1}
    $\mathsf{DNR}^{(0)} \nleqslant_\omega \mathsf{WKL}^{(1)}$.
\end{proposition}

\begin{proof}
    This separation can also be proved with the obvious weakness property; take $\mathcal{W} := \{ S : S \text{ is not DNC over } \emptyset \}$. By definition, $\mathsf{DNR}^{(0)}$ does not preserve $\mathcal{W}$. By Exercise 5.8.6 from \cite{patey_lowness_avoidance}, $\mathsf{WKL}^{(1)}$ does not preserve $\mathcal{W}$, as needed.
\end{proof}

In each of the above proofs, the preservation of a weakness property corresponds to an abstract degree-theoretic fact. In all four cases, these abstract degree-theoretic facts remain true when relativized to an arbitrary degree $D$, and we will take advantage of this when proving these separations in their general cases. These four theorems are stated in their relativized forms below.

\begin{theorem}[\cite{Patey2025} (slides 79-82)]
    \label{thmforWKL0vsWWWKL0}
    For any $D$ and any $Z$ such that $Z \not\gg D$, there exists a $G$ such that $G$ is $1$-random over $D$ and $(Z \oplus G) \not\gg D$.
\end{theorem}

\begin{theorem}[\cite{KhanMiller17} (Thm. 3.7)]
    \label{thmforWWKL0vsDNR0}
    For any $D$ and any $Z$ such that $Z$ is not $(x \mapsto 2^x)$-DNC over $D$, there exists a $G$ such that $G$ is DNC over $D$ and $(Z \oplus G)$ is not $(x \mapsto 2^x)$-DNC over $D$.
\end{theorem}

\begin{theorem}[Low basis theorem; \cite{JockuschSoare72} (Thm. 2.1). See \cite{DzhafarovMummert22} (Thm. 2.8.18)]
\label{lowbasistheorem}
    For any $D$ and any $Z$ such that $Z' \leq_\text{T} D'$, there exists a $G$ such that $G \gg D$ and $(Z \oplus G)' \leq_\text{T} D'$.
\end{theorem}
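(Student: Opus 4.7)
The plan is to build $G$ by forcing with nonempty $\Pi^0_1(D)$ classes, choosing these classes uniformly $D'$-effectively. I would begin with a fixed nonempty $\Pi^0_1(D)$ class $\mathcal{C}_0$ every member of which is of PA degree over $D$---for instance, the class of completions of Peano arithmetic together with the atomic diagram of $D$, equivalently the set of paths through a suitable universal $D$-computable infinite binary tree. Any $G \in \mathcal{C}_0$ will then automatically satisfy $G \gg D$, so the work is to arrange that some such $G$ is additionally low over $D$.

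Next, I would inductively build a decreasing chain $\mathcal{C}_0 \supseteq \mathcal{C}_1 \supseteq \mathcal{C}_2 \supseteq \cdots$ of nonempty $\Pi^0_1(D)$ classes that, for each $e$, decide whether $\Phi_e^G(e)$ converges for the eventual $G$. At stage $e$, form $\mathcal{D}_e := \mathcal{C}_e \cap \{X : \Phi_e^X(e) \uparrow\}$, which is again $\Pi^0_1(D)$; if $\mathcal{D}_e \neq \emptyset$, set $\mathcal{C}_{e+1} := \mathcal{D}_e$, and otherwise set $\mathcal{C}_{e+1} := \mathcal{C}_e$, in which case every $X \in \mathcal{C}_e$ satisfies $\Phi_e^X(e) \downarrow$. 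Choosing any $G \in \bigcap_e \mathcal{C}_e$ (nonempty by compactness of nested nonempty closed subsets of Cantor space) produces a set with $G \gg D$ for which, for every $e$, $\Phi_e^G(e) \downarrow$ if and only if $\mathcal{D}_e = \emptyset$.

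The main technical point---and the one I expect to be the chief obstacle to pin down---is showing that this construction can be driven uniformly by $D'$. The crucial fact is that emptiness of a $\Pi^0_1(D)$ class is a $\Sigma^0_1(D)$ property: by compactness, a $\Pi^0_1(D)$ class is nonempty iff its defining $D$-computable tree is infinite, and infinitude of such a tree is a $\Pi^0_1(D)$ statement. Hence $D'$ can decide at each stage whether $\mathcal{D}_e$ is empty and uniformly compute an index for $\mathcal{C}_{e+1}$ from one for $\mathcal{C}_e$. This yields $G' \leq_T D'$ via the algorithm: on input $e$, use $D'$ to recover an index for $\mathcal{C}_e$ and then decide whether $\mathcal{D}_e = \emptyset$. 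For the reverse inequality $D' \leq_T G'$, note that $G \gg D$ forces $G \geq_T D$ (apply the path-computing property to a $D$-computable tree whose unique path codes $D$), and so $G' \geq_T D'$, yielding $G' \equiv_T D'$ as desired.
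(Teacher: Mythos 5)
Your proof is correct: it is the standard Jockusch--Soare argument, forcing with a nested sequence of nonempty $\Pi^0_1(D)$ classes starting from a class of sets of PA degree over $D$ and using the fact that $D'$ uniformly decides emptiness of $\Pi^0_1(D)$ classes, together with the observation that $G \gg D$ implies $G \geq_\text{T} D$ and hence $G' \geq_\text{T} D'$. The paper does not prove this theorem but simply cites it, and your argument matches the proof given in the cited references.
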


\begin{theorem}[\cite{patey_lowness_avoidance} (Ex. 5.8.6)]
    \label{thmforDNR0vsWKL1}
    For any $D$, and any $Z$ such that $Z$ is not DNC over $D$, and any $A$, there exists a $G$ such that $G' \geq_\text{T} A$ and $(Z \oplus G)$ is not DNC over $D$.
\end{theorem}

Notice that this Theorem \ref{thmforDNR0vsWKL1} is stronger than what we needed to prove Proposition \ref{AST1vsWKL0}, since this theorem can be used to produce a $G$ whose jump computes any arbitrary fixed set $A$ (which in particular could be chosen to be of PA degree over $D'$).

Our proofs of Theorems \ref{WKLmvsWWKLm}, \ref{WWKLmvsDNRm}, and \ref{DNRmvsWKLn} (i.e., those in which a principle is separated from a one whose ``iterated jump exponent'' is no larger than that of the first principle) are identical in form to one another. The proof of Proposition \ref{ASTnvsWKLm}, in which a principle is separated from one with strictly larger ``iterated jump exponent,'' is shorter than the other three and uses Friedberg's jump inversion theorem. Let us begin with this one.

\subsection{Proof that \texorpdfstring{$\mathsf{AST}^{(n)} \nleqslant_\omega \mathsf{WKL}^{(m)} $}{AST(n) ⩽̸ω WKL(m)} for \texorpdfstring{$m < n$}{m<n}}

To prove this separation, we combine the low basis theorem (which proves the separation in the case where $m = 0$ and $n = 1$) with Friedberg's jump inversion theorem.

\begin{theorem}[Jump inversion theorem; \cite{Friedberg57} (p. 159). See \cite{Soare16} (Thm. 6.4.4)]
    For any $D$ and any $A \geq_\text{\emph{T}} D'$, there exists a $G \geq_\text{\emph{T}} D$ such that $G' \equiv_\text{\emph{T}} A$.
\end{theorem}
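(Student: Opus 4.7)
The plan is to prove Friedberg's jump inversion theorem by a finite-extension construction of $G \in 2^\omega$ that is computable from $A$. I build $G = \bigcup_s \sigma_s$ with $\sigma_s \in 2^{<\omega}$ by stages, each stage handling a single Turing-functional index. Coding $D$ into $G$ is achieved by reserving a computable sparse set of positions (say, the odd ones) for $D$'s bits, which immediately gives $G \geq_\text{T} D$; by relativizing all extendability questions to $D$, I may then argue as if $D = \emptyset$, i.e.\ with $A \geq_\text{T} \emptyset'$.

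At each stage I handle one Turing index $e$, of which there are two flavors. For a ``control'' index, I use $A$ (which computes $\emptyset'$) to decide the $\Sigma^0_1$ question ``does some $\tau \supseteq \sigma_s$ satisfy $\Phi_e^\tau(e) \downarrow$?''. If not, $\Phi_e^G(e) \uparrow$ is forced and I commit $e \notin G'$; if so, I extend $\sigma_s$ to a witness and commit $e \in G'$. For the ``coding'' indices, I reserve a computable injection $p : \omega \to \omega$ (disjoint from the control indices), and, using the s-m-n theorem, I arrange that $\Phi_{p(n)}^X(p(n))$ halts iff $X$ has a $1$ at a reserved position $c(n)$. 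At the coding stage for $n$, I directly fix $\sigma_{s+1}(c(n)) := A(n)$, so that $p(n) \in G'$ iff $G(c(n)) = 1$ iff $A(n) = 1$.

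The verification is then straightforward: $G' \leq_\text{T} A$ because the construction is $A$-computable and at each stage explicitly commits the value $G'(e)$ (for control indices, via $\emptyset'$-extendability; for coding indices, via $A$ itself), so $G'$ is literally the output of an $A$-computable procedure rather than merely inferred post hoc from $G$; and $A \leq_\text{T} G'$ because $A(n) = G'(p(n))$. The main obstacle is the bookkeeping: each coding position $c(n)$ must be chosen large enough to lie outside the current $\sigma_s$ and far from positions queried in control-stage extendability analyses, and the stages must be ordered so that the coding commitment of $\sigma(c(n)) = A(n)$ precedes any control stage whose extendability witness might try to overwrite it. This scheduling is effective since only finitely many indices are being juggled at any finite stage, but it is the technical heart of the argument.
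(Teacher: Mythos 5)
The paper does not prove this statement---it is quoted as a black box from Friedberg/Soare---so the only question is whether your construction is sound. Its skeleton (an $A$-computable finite-extension construction, with ``control'' stages that use $D'\leq_\text{T} A$ to decide extendability of $\Phi_e^{G}(e)\!\downarrow$ and thereby settle $G'(e)$ on the fly, plus coding of $D$ and $A$ into $G$) is exactly the standard proof. The problem is the specific mechanism you chose for $A \leq_\text{T} G'$: coding $A(n)$ into a \emph{fixed computable} position $c(n)$ so that $A(n) = G'(p(n))$ for a computable $p$ obtained by s-m-n. For $p$ to be computable, $c$ must be fixed in advance of the construction; but the witness $\tau$ found at a control stage is produced by an unbounded $\Sigma^0_1$ search and has no computable bound on its length, so it will in general assign values to reserved positions $c(n)$ for $n$ whose coding stage has not yet occurred, and you cannot alter $\tau$ there without possibly destroying the convergence $\Phi_e^{\tau}(e)\!\downarrow$ (its use may include those positions). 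The ``scheduling'' you defer to cannot repair this: you would have to run infinitely many coding stages before a single control stage. The natural alternative---restricting the control-stage search to extensions that respect the $A$-coding on reserved positions---turns the extendability question into a $\Sigma^0_1(A)$ question, which $A$ cannot decide (you only have $A \geq_\text{T} D'$, not $A \geq_\text{T} A'$), so the construction would no longer be $A$-computable. Note the asymmetry: fixed-position coding of $D$ is harmless, because the extendability question relative to the $D$-coding constraint is $\Sigma^0_1(D)$ and hence decidable from $D' \leq_\text{T} A$; it is only the $A$-coding that breaks.

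The standard repair is to abandon fixed coding positions and the s-m-n decoding altogether: at the coding stage for $n$, simply append the bit $A(n)$ to the end of the current string, so the position of the $n$-th coded bit is determined dynamically by the construction. Then $A \leq_\text{T} G'$ is recovered not as $A(n) = G'(p(n))$ but by replaying the construction from $G \oplus D'$: since $D \leq_\text{T} G$ gives $D' \leq_\text{T} G'$ and $G \leq_\text{T} G'$, the oracle $G'$ can re-answer every control-stage extendability question, reconstruct each $\sigma_s$, and read off $A(n)$ as the appended bit. With that change (and your $G' \leq_\text{T} A$ direction, which is fine as stated), the proof is the classical one.
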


\begin{proof}[Proof of Theorem \ref{AST1vsWKL0}]
    Let $\mathcal{W} = \{ S : S^{(n)} \leq_\text{T} \emptyset^{(n)} \}$. By definition, $\mathsf{AST}^{(n)}$ does not preserve $\mathcal{W}$. We must prove that $\mathsf{WKL}^{(m)}$ preserves $\mathcal{W}$. Let $Z \in \mathcal{W}$ and $X \leq_\text{T} Z$. We must find a $\mathsf{WKL}^{(m)}$-solution $Y$ to $X$ such that $Z \oplus Y \in \mathcal{W}$. By the \hyperref[lowbasistheorem]{low basis theorem}, there exists some $G \gg Z^{(m)}$ such that $G' \equiv_\text{T} Z^{(m+1)}$. Using Friedberg's jump inversion theorem $m$ times (relativized to each of $Z^{(m-1)}, Z^{(m-2)}, \dots Z$ in that order), we can find a $Y \geq_\text{T} Z$ such that $Y^{(m)} \equiv_\text{T} G$. Since $G \gg Z^{(m)}$ and $Z^{(m)} \geq_\text{T} X^{(m)}$, we have $Y^{(m)} \gg X^{(m)}$, as needed. Additionally, since $Y \geq_\text{T} Z$ and $Y^{(m+1)} \equiv_\text{T} Z^{(m+1)}$, we have $(Z \oplus Y)^{(n)} \equiv_\text{T} Y^{(n)} \equiv_\text{T} Z^{(n)}$. Since $Z^{(n)} \leq_\text{T} \emptyset^{(n)}$, it follows that $Y^{(n)} \leq_\text{T} \emptyset^{(n)}$, as needed.
\end{proof}

\subsection{Proofs that \texorpdfstring{$\mathsf{WKL}^{(m)} \nleqslant_\omega \mathsf{WWKL}^{(m)}$}{WKL(m) ⩽̸ω WWKL(m)}, \texorpdfstring{$\mathsf{WWKL}^{(m)} \nleqslant_\omega \mathsf{DNR}^{(m)}$}{WWKL(m) ⩽̸ω DNR(m)}, and \texorpdfstring{$\mathsf{DNR}^{(m)} \nleqslant_\omega \mathsf{WKL}^{(n)}$}{DNR(m) ⩽̸ω WKL(n)} for \texorpdfstring{$m<n$}{m<n}}

To prove these separations, it will suffice to prove the following ``iterated jump'' versions of the relevant degree-theoretic facts. (In the case of the second separation, proving this degree-theoretic fact suffices because \cite{ASKHLS}'s proof that $\mathsf{WWKL}^{(0)}$ does not preserve $\mathcal{W} = \{ S : S \text{ is not } (x \mapsto 2^x)\text{-DNC over } \emptyset \}$ immediately generalizes to an analogous statement regarding $\mathsf{WWKL}^{(m)}$.)

\begin{theorem}
    \label{thmforWKLmvsWWKLm}
    For any $D$ and any $Z$ such that $Z^{(m)} \not\gg D^{(m)}$, there exists a $G$ such that $G^{(m)}$ computes a $1$-random over $D^{(m)}$ and $(Z \oplus G)^{(m)} \not\gg D^{(m)}$.
\end{theorem}

\begin{theorem}
    \label{thmforWWKLmvsDNRm}
    For any $D$ and any $Z$ such that $Z^{(m)}$ is not $(x \mapsto 2^x)$-DNC over $D^{(m)}$, there exists a $G$ such that $G^{(m)}$ is DNC over $D^{(m)}$ and $(Z \oplus G)^{(m)}$ is not $(x \mapsto 2^x)$-DNC over $D^{(m)}$.
\end{theorem}

\begin{theorem}
    \label{thmforDNRmvsWKLn}
    For any $D$, any $Z$ such that $Z^{(m)}$ is not DNC over $D^{(m)}$, and any $A$, there exists a $G$ such that $G^{(m+1)} \geq_\text{T} A$ and $(Z \oplus G)^{(m)}$ is not DNC over $D^{(m)}$.
\end{theorem}

Similarly to the base case, Theorem \ref{thmforDNRmvsWKLn} is stronger than what we need to prove Theorem \ref{DNRmvsWKLn}; not only can Theorem \ref{thmforDNRmvsWKLn} be used to produce a set $G$ whose $n$th jump is of PA degree over $D^{(n)}$ (where $n > m$), it can be used to produce a set $G$ whose $(m+1)$st jump computes any arbitrary fixed set $A$.

The proofs of these three theorems are very similar in form. Here, we only provide an explicit proof of Theorem \ref{thmforDNRmvsWKLn}, but this proof can be modified in a straightforward way to produce proofs of Theorems \ref{thmforWKLmvsWWKLm} and \ref{thmforWWKLmvsDNRm}.

\begin{proof}[Proof of Theorem \ref{thmforDNRmvsWKLn}]
    We prove this by induction on $m$. The base case ($m=0$) is given by Theorem \ref{thmforDNR0vsWKL1}. In the inductive case, we assume that, for any $D$, any $Z$ with $Z^{(m-1)}$ not DNC over $D^{(m-1)}$, and any $A$, there exists some $G$ such that $(Z \oplus G)^{(m-1)}$ is not DNC over $D^{(m-1)}$ and that $G^{(m)} \geq_\text{T} A$. For simplicity of notation, we will prove the inductive step in the case where $D = \emptyset$; the argument easily relativizes by writing the letter ``$D$'' where necessary. By the inductive hypothesis (taken with $D = \emptyset'$ and $Z = Z'$), there exists some $G_0$ such that $(Z' \oplus G_0)^{(m-1)}$ is not DNC over $(\emptyset')^{(m-1)} = \emptyset^{(m)}$ and $G_0^{(m)} \geq_\text{T} A$. To produce the requisite set $G$, we will use a version of Towsner forcing, a poset originally used in \cite{Towsner2015} to prove a conservativity result and which is suitable for producing sets that are computationally weak but whose jumps compute some fixed object. Our \emph{conditions} are pairs $(g,k)$ where $g \subseteq \omega^2 \to 2$ is a finite partial function and $k \in \omega$, and the \emph{interpretation} $[g,k]$ of a condition is the class of all partial functions $h \subseteq \omega^2 \to 2$ such that
    \begin{itemize}
        \item $g \subseteq h$ and
        \item for all $(x,y) \in \text{dom}(h) \setminus \text{dom}(g)$, if $x < k$, then $h(x,y) = G_0(x)$.
    \end{itemize}
    If $(h,\ell)$ is a condition such that $h \in [g,k]$ and $\ell \geq k$, we say that $(h, \ell)$ \emph{extends} $(g,k)$ and we denote this $(h, \ell) \leqslant (g,k)$. Every filter $\mathcal{F}$ for this notion of forcing induces a function $f_\mathcal{F} := \bigcup \{ g : (g,k) \in \mathcal{F} \}$, and, if $\mathcal{F}$ is sufficiently generic, then $f_\mathcal{F}$ is total and $\lim_{x \to \infty} f_\mathcal{F}(x,y) = G_0$, which implies, by the limit lemma, that $f_\mathcal{F}' \geq_\text{T} G_0$ and thus that $f_\mathcal{F}^{(m+1)} \geq_\text{T} G_0^{(m)} \geq_\text{T} A$. Thus, all that remains to show is that $(Z, \oplus f_\mathcal{F})^{(m)}$ is not DNC over $\emptyset^{(m)}$.
    
    To do this, we will define the following ``forcing question,'' which is a special type of completion of the forcing relation. Speaking generally, a forcing question takes as input a forcing condition and a formula, and outputs an answer of ``yes'' or ``no.'' If the condition forces the formula, then the question will output ``yes,'' and if the condition forces the formula's negation, the question will output ``no.'' For all other formulas (i.e., for those formulas neither forced to be true nor forced to be false by the condition), the question will still give an answer. In this way, a forcing question forms a dividing line between two sets of formulas. In addition to being a completion of the forcing relation, forcing questions will generally also satisfy some other special properties depending on which weakness property one is trying to preserve. For more on forcing questions, see \cite{patey_lowness_avoidance}.

    \begin{definition}
        \label{forcingquestion}
        Fix a condition $(g,k)$ and a $\Sigma^0_i(Z)$ formula $\varphi(f) \equiv \exists x \psi(f,x)$ (where $\psi(f,x)$ is a $\Pi^0_{i-1}(Z)$ formula).
        \begin{enumerate}
            \item If $i = 1$, let $(g,k) \ {?}{\vdash} \ \varphi(f)$ hold if there is some $x$ and some $(h, \ell) \leqslant (g,k)$ such that $\psi(h,x)$ holds.
            \item If $i > 1$, let $(g,k) \ {?}{\vdash} \ \varphi(f)$ hold if there is some $x$ and some $(h, \ell) \leqslant (g,k)$ such that $(h, \ell) \ {?}{\nvdash} \ \neg \psi(f,x)$.
        \end{enumerate}
        For a $\Pi^0_i(Z)$ formula $\varphi(f)$, define $(g,k) \ {?}{\vdash} \ \varphi(f)$ to hold if $(g,k) \ {?}{\nvdash} \ \neg \varphi(f)$.
    \end{definition}
     By definition, for any $(g,k)$ and any formula $\varphi(f)$ arithmetical in $Z$, either $(g,k) \ {?}{\vdash} \ \varphi(f)$ holds or $(g,k) \ {?}{\vdash} \ \neg \varphi(f)$ holds. Thus, this ``forcing question'' is a completion of the forcing relation. It also enjoys the following additional special proeprties.
    \begin{claim}
        Let $\varphi(f)$ be a $\Sigma^0_i(Z)$ formula.
        \begin{enumerate}
            \item If $(g,k) \ {?}{\vdash} \ \varphi(f)$, then there is an extension $(h,\ell) \leqslant (g,k)$ forcing $\varphi(f)$.
            \item If $(g,k) \ {?}{\nvdash} \ \varphi(f)$, then $(g,k)$ forces $\neg \varphi(f)$.
            \item Fix a condition $(g,k)$. If $\varphi(f)$ is $\Sigma^0_1(Z)$, then the statement ``$(g,k) \ {?}{\vdash} \ \varphi(f)$'' is $\Sigma^0_1(Z)$. On the other hand, if $\varphi(f)$ is $\Sigma^0_i(Z)$ for $i > 1$, then the statement ``$(g,k) \ {?}{\vdash} \ \varphi(f)$'' is $\Sigma^0_1((Z' \oplus G_0)^{(i-2)})$.
        \end{enumerate}
    \end{claim}

    \begin{proof}[Proof of Claim]
        We prove parts (1) and (2) simultaneously by induction on $i$. In the base case, assume that $\varphi(f) \equiv \exists x \psi(f,x)$, where $\psi(f,x)$ is $\Delta^0_0(Z)$. Towards proving part (1), assume that $(g,k) \ {?}{\vdash} \ \varphi(f)$. Take $x_0$ and $(h_0, \ell_0) \leqslant (g,k)$ to witness this fact. The condition $(h_0, \ell_0)$ forces $\varphi(f)$, as needed. Towards proving part (2), assume that $(g,k)$ does not force $\neg \varphi(f)$. By the definition of forcing, there exists some $(h, \ell) \leqslant (g,k)$ and some $x$ such that $\psi(h,k)$ holds. By the definition of ${?}{\vdash}$, it follows that $(g,k) \ {?}{\vdash} \ \varphi(f)$, as needed.

        In the inductive case, let $i > 1$, and assume that $\varphi(f) \equiv \exists x \psi(f,x)$, where $\psi$ is $\Pi^0_{i-1}(Z)$. Towards proving part (1), assume that $(g,k) \ {?}{\vdash} \ \varphi(f)$. Take $x_0$ and $(h_0, \ell_0) \leqslant (g,k)$ to witness this fact. By part (2) of the inductive hypothesis, $(h_0, \ell_0)$ forces $\varphi(f)$, as needed. Towards proving part (2), assume that $(g,k) \ {?}{\nvdash} \ \varphi(f)$. By the definition of ${?}{\vdash}$, it follows that, for every $x$ and every $(h_0, \ell_0) \leqslant (g,k)$, we have $(h_0, \ell_0) \ {?}{\nvdash} \ \psi(f,x)$. By the inductive hypothesis, it follows that, for every $x$ and every $(h_0, \ell_0) \leqslant (g,k)$, there is some $(h, \ell) \leqslant (h_0, \ell_0)$ forcing $\neg \psi(f,kx)$. Therefore, for each sufficiently generic filter $\mathcal{F}$ containing $(g,k)$ and each $x$, there is some $(h,\ell) \in \mathcal{F}$ forcing $\neg \psi(f,x)$. Thus, $(g,k)$ forces $\neg \varphi(f)$.

        For part (3), we prove the case $i = 1$ directly, and then we prove the case $i > 1$ by induction on $i$. First, let $i = 1$. The statement ``$(g,k) \ {?}{\vdash} \ \varphi(f)$'' asserts the existence of a number $x$ and a pair $(h,\ell)$ (each of which is a finitary object) satisfying a predicate that is computable in $Z$ together with the first $k$ bits of the set $G_0$. Since determining the first $k$ bits of $G_0$ is computable (although not uniformly in $k$), the statement ``$(g,k) \ {?}{\vdash} \ \varphi(f)$'' is $\Sigma^0_1(Z)$.
        
        Now, we will prove the case $i > 1$ by induction. In the base case, let $i = 2$. By definition, the statement ``$(g,k) \ {?} {\vdash} \ \varphi(f)$'' asserts that there is some $x$ and some $(h, \ell) \leqslant (g,k)$ such that $(h, \ell) \ {?}{\nvdash} \ \neg \psi(f,x)$, where $\psi(f,x)$ is some $\Pi^0_1(Z)$ statement such that $\varphi(f) = \exists x \psi(f,x)$. It is tempting to argue that, since the quantifications over $x$ and $(h,\ell)$ only use the first $k$ bits of $G_0$ and since the statement ``$(h, \ell) \ {?}{\nvdash} \ \neg \psi(f,x)$'' is $\Sigma^0_1(Z)$, it follow thats the statement ``$(g,k) \ {?}{\vdash} \ \varphi(f)$'' has complexity $\Sigma^0_2(Z)$. However, such an argument would not be correct, because, although the statement ``$(h, \ell) \ {?}{\nvdash} \ \neg \psi(f,x)$'', while $\Sigma^0_1(Z)$, is not $\Sigma^0_1(Z)$ \emph{uniformly} in $(h, \ell)$. However, the statement ``$(g,k) \ {?}{\vdash} \ \varphi(f)$'' does have complexity $\Sigma^0_1(Z' \oplus G_0)$. To prove this, we will present a certain precise formulation of $\varphi(f)$ whose complexity is straightforward to analyze. Assume that $\varphi(f) \equiv \exists x \forall y \chi(f,x,y)$, where $\chi(f,x,y)$ is $\Delta^0_0(Z)$.
        \begin{align*}
            (g,k) \ {?}{\vdash} \ \varphi(f) \ \equiv \ & \exists x \ \exists \text{ finite partial } h \subseteq \omega^2 \to 2 \\
            & \ \ \ \ \ \ \ \ [\forall x < k, \ \forall y, \ (x,y) \in \text{dom}(h_0) \ \to \ h_0(x,y) = G_0(x)], \\ & \exists \ell \geq k, \ \exists F : 2^\ell \to 2 \ \ [\forall x < \ell \ F(x) = G_0(x)], \\
            & \ \ \ \ \forall y \  \forall \text{ finite partial } h_0 \subseteq \omega^2 \to 2 \\
            & \ \ \ \ \ \ \ \ \ \ [\forall x < \ell, \ \forall y, \ (x,y) \in \text{dom}(h_0) \ \to \ h_0(x,y) = F(x)], \\
            & \ \ \ \ \ \ \ \ \chi(h_0,x,y).
        \end{align*}
        The important feature of this formulation of $\varphi(f)$ is that its second half (i.e., the subformula starting with ``$\forall y$...'') is uniformly $\Pi^0_1$ in $Z$ \emph{as well as} in all the variables quantified over in the previous part of the formula. From this observation, we can conclude that the statement ``$(g,k) \ {?}{\vdash} \ \varphi(f)$ is $\Sigma^0_1(Z' \oplus G_0)$.

        To prove the inductive case ($i > 2$), one can construct a precise formulation similar to the one above. The key idea behind this formulation is for each block of quantifiers to include a finite set $F$ encoding however many bits of information about $G_0$ are needed to make the subsequent subformula uniformly definable in all of the variables that have yet been introduced.
    \end{proof}

    Now, we return to proving that $(Z, \oplus f_\mathcal{F})^{(m)}$ is not DNC over $\emptyset^{(m)}$. For each condition $(g,k)$ and each Turing index $e$, we must find an extension $(h, \ell) \leqslant (g,k)$ forcing $\Phi_e^{(Z \oplus f)^{(m)}}$ not to be a DNC function over $\emptyset^{(m)}$. Fix a condition $(g,k)$, and define $$\mathcal{U} := \{ (x,v) \in \omega^2 : (g,k) \ {?}{\vdash} \ \varphi(f) \},$$ where $\varphi(f)$ is some $\Sigma^0_{m+1}(Z)$ formula asserting that $\Phi_e^{(Z \oplus f)^{(m)}}(x) \downarrow v$. This set $\mathcal{U}$ is the set of possible input-output pairs for the function $\Phi_e^{(Z \oplus f)^{(m)}}$ that are validated by the forcing question for this condition. There are three cases:
    \begin{itemize}
        \item Case 1: There is some $x$ such that $(x, \Phi_x^{\emptyset^{(m)}}(x)) \in \mathcal{U}$. In this case, by part (1) of our Claim, we can find an extension $(h, \ell) \leqslant (g,k)$ forcing $\Phi_e^{(Z \oplus f)^{(m)}}(x) \downarrow = \Phi_x^{\emptyset^{(m)}}(x)$.
        \item Case 2: There is some $x$ such that, for each $v \in \omega$, $(x,v) \notin \mathcal{U}$. In this case, by part (2) of our Claim, $(g,k)$ itself forces $\Phi_e^{(Z \oplus f)^{(m)}}(x) \uparrow$.
        \item Case 3: Neither of the above holds. In this case, define $\mathcal{U}_0$ to be the set of pairs $(x,v)$ such that $(x,v) \in \mathcal{U}$ and $(x,w) \notin \mathcal{U}$ for all $w < v$. This set $\mathcal{U}_0$ is computable in $\mathcal{U}$. By part (3) of our claim, the set $\mathcal{U}$ is of complexity $\Sigma^0_{1}((Z' \oplus G_0)^{(m-1)})$. It follows that $\mathcal{U}_0$ is also of complexity $\Sigma^0_1((Z' \oplus G_0)^{(m-1)})$. Since neither Case 1 nor Case 2 holds, $\mathcal{U}_0$ is the graph of a $\emptyset^{(m)}$-DNC function, contradiction our assumption that $(Z' \oplus G_0)^{(m-1)}$ is not DNC over $\emptyset^{(m)}$.
    \end{itemize}
\end{proof}

\printbibliography

\end{document}